\newtheorem{theorem}{Theorem}[section]
\newtheorem{lemma}[theorem]{Lemma}
\newtheorem{proposition}[theorem]{Proposition}
\newenvironment{proof}{\noindent
  \textbf{Proof.}}{\hfill$\Box$\\}
\newcommand{\vp}{\ensuremath{\varphi}}
\newcommand{\con}{\wedge}
\newcommand{\dis}{\vee}
\newcommand{\imp}{\rightarrow}
\newcommand{\bottom}{\perp}
\newcommand{\mmodel}[1]{\ensuremath{\frak{#1}}}
\newcommand{\sat}[3]{\ensuremath{\frak{#1}, #2 \models #3}}
\newcommand{\noop}[1]{}
\begin{document}

\title{Recursive enumerability and elementary frame definability in
  predicate modal logic\thanks{A pre-final draft of a paper to appear
    in \textit{Journal of Logic and Computation}, DOI
    \mbox{10.1093/logcom/exz028}.}} \author{Mikhail
  Rybakov\thanks{Institute for Information Transmission Problems,
    Russian Academy of Sciences, and Higher School of Economics,
    Moscow, Russia, \texttt{m\_rybakov@mail.ru}} and Dmitry
  Shkatov\thanks{School of Computer Science and Applied Mathematics,
    University of the Witwatersrand, Johannesburg, South Africa,
    \texttt{shkatov@gmail.com}}}

\selectlanguage{english}
\date{\today}
\maketitle

\begin{abstract}
  We investigate the relationship between recursive enumerability and
  elementary frame definability in first-order predicate modal logic.
  On the one hand, it is well-known that every first-order predicate
  modal logic complete with respect to an elementary class of Kripke
  frames, i.e., a class of frames definable by a classical first-order
  formula, is recursively enumerable.  On the other, numerous examples
  are known of predicate modal logics, based on ``natural''
  propositional modal logics with essentially second-order Kripke
  semantics, that are either not recursively enumerable or Kripke
  incomplete.  This raises the question of whether every Kripke
  complete, recursively enumerable predicate modal logic can be
  characterized by an elementary class of Kripke frames.  We answer
  this question in the negative, by constructing a normal predicate
  modal logic which is Kripke complete, recursively enumerable, but
  not complete with respect to an elementary class of frames.  We also
  present an example of a normal predicate modal logic that is
  recursively enumerable, Kripke complete, and not complete with
  respect to an elementary class of rooted frames, but is complete
  with respect to an elementary class of frames that are not rooted.
\end{abstract}

\section{Introduction}
\label{sec:into}

It has been observed (see, e.g., \cite{RShAiML}) that first-order
predicate modal logics built on top of propositional modal logics with
essentially second-order Kripke semantics usually exhibit some
undesirable properties.  Indeed, for all ``natural'' propositional
modal logics with essentially second-order Kripke semantics known from
the literature---such as {\bf GL} (G\"{o}del-L\"{o}b), {\bf Grz}
(Grzegorczyk), as well as their ``linear'' counterparts {\bf GL.3} and
{\bf Grz.3}; the logic of finite Kripke frames; propositional dynamic
logics; epistemic logics with the common knowledge operator; and
branching-time temporal logics {\bf CTL} and {\bf CTL$^\ast$}---the
sets of predicate modal formulas valid on their frames are not
recursively enumerable~\cite{Rybakov01,Skv88,Skvortsov95,WZ01}, while
the logics obtained by adding to their representations as
Hilbert-style calculi, in cases where there exists one, the axioms and
inference rules of the classical first-order logic are Kripke
incomplete~\cite{Montagna84,Rybakov01,Skv88,Skvortsov95,WZ01}.  Thus,
it would appear that, in predicate modal logic, completeness with
respect to Kripke semantics with essentially second-order conditions
and recursive enumerability might be incompatible.  This, in
particular, raises the question of whether for Kripke complete
predicate modal logics recursive enumerability and completeness with
respect to an elementary class of frames coincide.\footnote{A similar
  question can be posed for $\Delta$-elementary classes of frames,
  i.e., classes of frames defined by sets of first-order formulas;
  this is not the notion we consider in the present paper.}

A partial answer was given in~\cite{RShAiML}, where it was shown that
recursive enumerability and completeness with respect to an elementary
class of frames do not coincide for Kripke complete quasi-normal
predicate modal logics; furthermore, it was assumed in~\cite{RShAiML},
without an explicit mention, that logics under consideration were
exclusively those determined by rooted frames, i.e., frames generated
by a single world.  The question of whether recursive enumerability
implies completeness with respect to an elementary class of frames for
Kripke complete normal predicate modal logics has, however, remained
open.

In the present paper, we answer that question in the negative, by
exhibiting an example of a normal predicate modal logic that is Kripke
complete, recursively enumerable, but not complete with respect to an
elementary class of Kripke frames.  We also show that the assumption
of all frames for a logic being rooted
may play a crucial role when answering this question: we construct a
normal predicate logic that enjoys the required properties if we
meta-logically restrict our attention to rooted frames, but ceases to
enjoy them once all frames are taken into consideration.  This might
be of independent interest, as for most purposes in modal logic, it
suffices to only consider rooted frames.

The paper is structured as follows. In Section~\ref{sec:prelim}, we
introduce the necessary preliminaries.  In
Section~\ref{sec:example-1}, we present an example of a normal
predicate modal logic that is Kripke complete, recursively enumerable,
but not complete with respect to an elementary class of rooted frames;
we also show that this logic \textit{is} complete with respect to an
elementary class containing frames that are not required to be rooted.
Then, in Section~\ref{sec:example-2}, we present an example of a
normal predicate modal logic that is Kripke complete, recursively
enumerable, but not complete with respect to an elementary class of
\textit{any} frames.  We conclude in Section~\ref{sec:discussion}.

\section{Preliminaries}
\label{sec:prelim}

A first-order predicate modal language contains countably many
individual variables; countably many predicate letters of every arity,
including zero; Boolean connectives $\neg$ and $\wedge$; the unary
modal connective $\Box$; and the quantifier $\forall$.  Atomic
formulas, formulas, as well as the symbols $\vee$, $\imp$, $\exists$,
and $\Diamond$ are defined in the usual way.  We assume that $\vee$
and $\wedge$ bind stronger than $\imp$.

Given a formula $\varphi$, we denote by $md(\varphi)$ the modal depth
of $\varphi$, which is defined inductively, as follows:
$$
\begin{array}{rcl}
  md(\alpha) & = & 0, \mbox{ where } \alpha \mbox{ is an atomic formula}; \\
  md(\neg \vp_1) & = & md(\vp_1); \\
  md(\vp_1 \con \vp_2) & = & \max\{md(\vp_1), md(\vp_2)\}; \\
  md(\forall y\,\vp_1) & = & md(\vp_1); \\
  md(\Box \vp_1) & = & md(\vp_1)\,{+}\,1.
\end{array}
$$

We also inductively define, for every $n \in \mathds{N}$ and every
formula $\vp$, the formulas $\Box^n\vp$ and $\Diamond^n \vp$, as
follows: $\Box^0 \vp = \vp$, $\Box^{n+1} \vp = \Box \Box^{n} \vp$;
$\Diamond^0 \vp = \vp$,
$\Diamond^{n+1} \vp = \Diamond \Diamond^{n} \vp$.

A {\em normal predicate modal logic} is a set $L$ of formulas
containing the validities of the classical first-order logic as well
as the formulas of the form
$\Box (\vp \imp \psi ) \imp ( \Box \vp \imp \Box \psi )$, and closed
under predicate substitution, modus ponens, generalization (if
$\varphi \in L$, then $\forall y\, \varphi \in L$), and necessitation
(if $\varphi \in L$, then $\Box \varphi \in L$).

Modal formulas can be interpreted using Kripke semantics. A {\em
  Kripke frame} is a tuple $\frak{F} = \langle W,R\rangle$, where $W$
is a non-empty set of {\em worlds} and $R$ is a {\em binary
  accessibility relation} on $W$.  A {\em predicate Kripke frame} is a
tuple $\frak{F}_D = \langle W,R, D\rangle$, where $\langle W,R\rangle$
is a frame and $D$ is a function from $W$ into a set of non-empty
subsets of some set, the {\em domain} of~$\frak{F}_D$, satisfying the
condition that $wRw'$ implies $D(w) \subseteq D(w')$. We call the set
$D(w)$ the {\em domain} of $w$. If a predicate frame satisfies the
condition that $wRw'$ implies $D(w) = D(w')$, we refer to it as a {\em
  predicate frame with constant domains}.

Note that a Kripke frame can be considered, in an obvious way, as a
model for the classical first-order language in the signature
$\{R, = \}$.

A frame $\frak{F} = \langle W,R\rangle$ is {\em rooted} if there
exists $w_0 \in W$ such that $w_0 R^\ast w$ holds for every $w \in W$,
where $R^\ast$ is the reflexive and transitive closure of $R$. Given a
frame $\langle W,R\rangle$ and $w, w' \in W$, we say that $w'$ is {\em
  accessible from $w$} or that $w$ {\em sees} $w'$ if $w R w'$ holds.
We say that $w \in W$ is a {\em dead end} if $w R v$ does not hold for
any $v \in W$.  If $\frak{C}$ is a class of frames and
$\frak{F} \in \frak{C}$, we refer to $\frak{F}$ as a {\em
  $\frak{C}$-frame}.

A {\em Kripke model} is a tuple $\frak{M} = \langle W,R,D,I\rangle$,
where $\langle W,R, D\rangle$ is a predicate Kripke frame and $I$ is a
function assigning to a world $w\in W$ and an $n$-ary predicate letter
$P$ an $n$-ary relation $I(w,P)$ on $D(w)$.  We refer to $I$ as an
{\em interpretation of predicate letters} with respect to worlds in
$W$.

We say that a model $\langle W,R,D,I\rangle$ is {\em based on} the
predicate frame $\langle W,R,D\rangle$ and the frame
$\langle W,R\rangle$; similarly, we say that a predicate frame
$\langle W,R,D\rangle$ is {\em based on} the frame
$\langle W,R\rangle$.  We also say that $\langle W,R,D\rangle$ is the
{\em underlying predicate frame} for a model $\langle W,R,D,I\rangle$.

An {\em assignment} in a model is a function $g$ associating with
every individual variable $y$ an element $g(y)$ of the domain of the
underlying predicate frame.

The truth of a formula $\varphi$ at a world $w$ of a model $\frak{M}$
under an assignment $g$ is defined inductively, as follows:
\begin{itemize}
\item
$\frak{M},w\models^g P(y_1,\ldots,y_n)$ if $\langle
g(y_1),\ldots,g(y_n)\rangle \in I(w,P)$;
\item $\frak{M},w\models^g\neg\varphi_1$ if
  $\frak{M},w\not\models^g\varphi_1$;
\item $\frak{M},w\models^g\varphi_1\wedge\varphi_2$ if
  $\frak{M},w\models^g\varphi_1$ and $\frak{M},w\models^g\varphi_2$;

\item $\frak{M},w\models^g\Box\vp_1$ if $wRw'$ implies
  $\frak{M},w'\models^g\varphi_1$, for every $w'\in W$;
\item $\frak{M},w\models^g\forall y\,\varphi_1$ if
  $\frak{M},w\models^{g'}\varphi_1$, for every assignment $g'$ such
  that $g'$ differs from $g$ in at most the value of $y$ and such that
  $g'(y)\in D(w)$.
\end{itemize}




We say that a formula $\vp$ is {\em true at a world $w$} of a model
$\frak{M}$ and write $\frak{M},w\models \vp$ if
$\frak{M},w\models^g \vp$ holds for every $g$ assigning to free
variables of $\vp$ elements of $D(w)$.  We say that $\vp$ is {\em
  valid at a world $w$} of a frame $\frak{F}$ if
$\frak{M},w\models \vp$ holds for every model $\frak{M}$ based on
$\frak{F}$.  We say that $\vp$ is {\em true in $\frak{M}$} and write
$\frak{M} \models \vp$ if $\frak{M},w\models \vp$ holds for every
world $w$ of $\frak{M}$.  We say that $\vp$ is {\em valid on a
  predicate frame $\frak{F}_D$} and write $\frak{F}_D \models \vp$ if
$\vp$ is true in every model based on $\frak{F}_D$. We say that $\vp$
is {\em valid on a frame $\frak{F}$} and write $\frak{F} \models \vp$
if $\vp$ is valid on every predicate frame
$\langle \frak{F}, D \rangle$.  We say that a set of formulas $\Gamma$
is {\em valid on a frame $\frak{F}$} and write
$\frak{F} \models \Gamma$ if $\frak{F} \models \vp$ holds for every
$\vp \in \Gamma$.  Finally, we say that a set of formulas is {\em
  valid on a class of frames} if it is valid on every frame from the
class.

Let $\frak{M} = \langle W,R,D,I\rangle$ be a model, $w \in W$, and
$a_1, \ldots, a_n \in D(w)$.  Let $\vp(y_1, \ldots, y_n)$ be a formula
whose free variables are among $y_1, \ldots, y_n$.  We write
$\frak{M}, w \models \vp [a_1, \ldots, a_n]$ to mean that
$\frak{M}, w \models^g \vp (y_1, \ldots, y_n)$, where
$g(y_1) = a_1, \ldots, g(y_n) = a_n$.

Given a class of frames $\frak{C}$, the set of predicate modal
formulas valid on every $\frak{C}$-frame is denoted by $L(\frak{C})$;
this set is a normal predicate modal logic.

A normal predicate modal logic $L$ is {\em sound and complete} with
respect to a class of frames $\frak{C}$ if $L = L(\frak{C})$; in this
case, for brevity, we also say that $L$ is complete with respect to
$\frak{C}$.  A logic is {\em Kripke complete} if it is sound and complete
with respect to some class of frames.

A class $\frak{C}$ of frames is {\em elementary} if there exists a
closed classical first-order formula $\Phi$ in the signature
$\{R, = \}$
such that $\mathfrak{F} \in \frak{C}$
if, and only if, $\Phi$ is true in $\mathfrak{F}$ considered as a
classical model, in which case we write $\mathfrak{F} \Vdash \Phi$.

The following proposition is well-known (see~\cite{ChR00},
\cite[Proposition 3.12.8]{GShS}).

\begin{proposition}
  \label{prop:re}
  Let $\frak{C}$ be an elementary class of frames.  Then,
  $L(\frak{C})$ is recursively enumerable.
\end{proposition}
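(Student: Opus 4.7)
The plan is to reduce frame validity of a predicate modal formula on the elementary class $\frak{C}$ to classical first-order validity of an associated sentence in an enriched two-sorted signature, and then invoke the well-known recursive enumerability of classical first-order logic.

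First, I would introduce a two-sorted classical signature, with one sort for worlds and one for individuals, containing $R$ and $=$, a mixed binary predicate $D(x,z)$ expressing ``the individual $z$ lies in the domain of the world $x$,'' and, for each $n$-ary predicate letter $P$ of the modal language, an $(n{+}1)$-ary mixed predicate $P^{*}(x,z_{1},\ldots,z_{n})$. The standard translation $\tau_{x}(\vp)$ is defined by recursion on $\vp$: atomic formulas go to $\tau_{x}(P(y_{1},\ldots,y_{n})) = P^{*}(x,y_{1},\ldots,y_{n})$; Booleans are homomorphic; $\tau_{x}(\Box \vp_{1}) = \forall x'\,(R(x,x') \imp \tau_{x'}(\vp_{1}))$; and $\tau_{x}(\forall y\, \vp_{1}) = \forall y\,(D(x,y) \imp \tau_{x}(\vp_{1}))$.

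Second, let $\Phi$ be the classical sentence in $\{R,=\}$ defining $\frak{C}$, and let $\Theta$ be the conjunction of the axioms for a predicate frame, namely monotonicity, $\forall x \forall x' \forall z\,(R(x,x') \con D(x,z) \imp D(x', z))$, and non-emptiness of domains, $\forall x \exists z\,D(x,z)$. By a routine induction on $\vp$, matching the semantic clauses from Section~\ref{sec:prelim}, every modal formula $\vp$ with free variables among $y_{1},\ldots,y_{n}$ is valid on every $\frak{C}$-frame if, and only if, the classical sentence
$$\Phi \con \Theta \imp \forall x \forall y_{1} \ldots \forall y_{n}\,\bigl(D(x,y_{1}) \con \ldots \con D(x,y_{n}) \imp \tau_{x}(\vp)\bigr)$$
is true in every two-sorted classical model. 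The key point is that the meta-theoretic universal quantification over domain functions $D$ and over interpretations $I$ of predicate letters corresponds exactly to classical validity, since in models of the antecedent the symbols $D$ and $P^{*}$ may be interpreted freely subject only to $\Theta$.

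Third, by G\"{o}del's completeness theorem (and the routine reduction from two-sorted to single-sorted classical logic), the displayed sentence is true in all classical models if, and only if, it is provable in classical predicate calculus; since classical provability is recursively enumerable, so is the set of modal $\vp$ for which the displayed sentence is provable, which is exactly $L(\frak{C})$. The one step that requires care is the inductive equivalence in the second paragraph: nothing deep happens, but one must check that the increasing-domain condition on predicate frames is faithfully captured by the monotonicity clause in $\Theta$, and that the convention from Section~\ref{sec:prelim} that $\vp$ is considered true at $w$ only under assignments of its free variables into $D(w)$ is reflected by the antecedent $D(x,y_{1}) \con \ldots \con D(x,y_{n})$.
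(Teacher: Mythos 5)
Your argument is correct and is essentially the same standard-translation reduction to classical first-order validity that the paper relies on for this well-known proposition (which it cites rather than proves) and then instantiates concretely in Lemmas~\ref{lem:Lw0intoQClE} and~\ref{lem:L1intoQClE}. The only cosmetic difference is that the paper works in a one-sorted signature with a unary predicate $W(x)$ marking worlds (and a relativized translation $\Phi^\ast$ of the defining sentence), whereas you use two sorts and appeal to the routine two-sorted-to-one-sorted reduction.
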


In the strict sense, the converse of Proposition~\ref{prop:re} is not
true, as some recursively enumerable logics are not complete with
respect to any class of frames whatsoever (i.e., are Kripke
incomplete), and thus, are not complete with respect to any elementary
class; some examples have been mentioned in the Introduction (Section
1).  A more interesting question is whether every recursively
enumerable Kripke complete logic is a logic of an elementary class of
frames.

The main contribution of this paper is in showing, which we do in
Section~\ref{sec:example-2}, that this is not so for normal predicate
logics---namely, we exhibit an example of a recursively enumerable
Kripke complete normal predicate logic that is not complete with
respect to an elementary class of frames.  Prior to that, however, we
consider, in the next section, a similar question for rooted frames.

\section{An example over rooted frames}
\label{sec:example-1}

In this section, we exhibit the normal predicate modal logic $L_0$
which is recursively enumerable, Kripke complete, but not complete
with respect to an elementary class of \textit{rooted} frames.  The
restriction to rooted frames is sufficient for most purposes in modal
logic; we show, however, later on in this section that in the context
of the present inquiry this restriction does matter---while $L_0$ is
not complete with respect to an elementary class of rooted frames, it
is complete with respect to an elementary class of frames that are not
required to be rooted.  In the next section, we discard the
restriction to rooted frames; the ideas introduced in this section
are, however, reused in that context.

Let $\frak{F}_n$, for $n \geqslant 1$, be the frame
$\langle W_n, R_n \rangle$, where
$W_n = \{ w_1, \ldots, w_n, w^\ast \}$ and
$R_n = \{ \langle w_i, w_{i+1} \rangle : 1 \leqslant i < n \} \cup \{
\langle w_1, w^\ast \rangle \}$;
the frame $\frak{F}_n$ is depicted in \mbox{Figure}~\ref{fig}.  Denote
the set of all such frames by $\frak{C}^\ast$, and let
$\frak{C}_0 = \{ \frak{F}_{2n} \in \frak{C}^\ast : n \geqslant 1 \}$;
finally, let $L_0 = L(\frak{C}_0)$.

\begin{figure}
\centering
\begin{tikzpicture}[scale=1.5]

\coordinate (w1)   at (+0.0000,+0.0000);
\coordinate (w2)   at (+1.0000,+0.0000);
\coordinate (w3)   at (+2.0000,+0.0000);
\coordinate (w4ph) at (+3.0000,+0.0000);
\coordinate (dts)  at (+3.5000,+0.0000);
\coordinate (w5ph) at (+4.0000,+0.0000);
\coordinate (wn-1) at (+5.0000,+0.0000);
\coordinate (wn)   at (+6.0000,+0.0000);
\coordinate (w*)   at (+0.0000,+1.0000);

\begin{scope}[>=latex]
\draw [->,  shorten >= 1.5pt, shorten <= 1.5pt]
(w1) -- (w2);
\draw [->,  shorten >= 1.5pt, shorten <= 1.5pt]
(w2) -- (w3);
\draw [->,  shorten >= 1.5pt, shorten <= 1.5pt]
(w3) -- (w4ph);
\draw [->,  shorten >= 1.5pt, shorten <= 1.5pt]
(w5ph) -- (wn-1);
\draw [->,  shorten >= 1.5pt, shorten <= 1.5pt]
(wn-1) -- (wn);
\draw [->,  shorten >= 1.5pt, shorten <= 1.5pt]
(w1) -- (w*);
\end{scope}

\node [below      ] at (w1)   {${w_1}$}     ;
\node [below      ] at (w2)   {${w_2}$}     ;
\node [below      ] at (w3)   {${w_3}$}     ;
\node [           ] at (dts)  {${\cdots}$}  ;
\node [above      ] at (w*)   {${w^\ast}$}  ;
\node [below      ] at (wn-1) {${w_{n-1}}$} ;
\node [below      ] at (wn)   {${w_n}$}     ;


\filldraw [] (w1)   circle [radius=1.5pt]   ;
\filldraw [] (w2)   circle [radius=1.5pt]   ;
\filldraw [] (w3)   circle [radius=1.5pt]   ;
\filldraw [] (w*)   circle [radius=1.5pt]   ;
\filldraw [] (wn-1) circle [radius=1.5pt]   ;
\filldraw [] (wn)   circle [radius=1.5pt]   ;

\end{tikzpicture}

\caption{Frame $\frak{F}_n$}
\label{fig}
\end{figure}

The logic $L_0$ is Kripke complete by definition.  We next show that
$L_0$ is recursively enumerable and that $L_0$ is not complete with
respect to an elementary class of rooted frames.

To show that $L_0$ is recursively enumerable, we effectively embed it
into the classical first-order logic with equality ${\bf QCl}_=$,
whose set of theorems is known to be recursively enumerable.

Let $R$ and $D$ be binary, and $W$ unary, predicate letters not
occurring in $\varphi$; intuitively, $W(x)$ means that $x$ is a world,
$D(x,y)$ means that $y$ is an element of the domain of $x$, and
$R(x,y)$ means that $y$ is accessible from $x$.

Let $ST_x(\varphi)$ be the standard translation (see~\cite{ChR00},
\cite[Section 3.12]{GShS}) of the predicate modal formula $\varphi$
into the language of ${\bf QCl}_=$, defined as follows:
$$
\begin{array}{rcl}
\label{STx_modal}

  ST_x(P(y_1,\dots,y_m)) & = & P'(y_1,\dots,y_m,x); \\
  ST_x(\vp_1\con\vp_2) & = & ST_x(\vp_1) \con ST_x(\vp_2); \\
  ST_x(\neg \vp_1) & = & \neg ST_x(\vp_1); \\
  ST_x(\Box \vp_1) & = & \forall y\,(W(y)\wedge R(x,y)\imp
                     ST_y( \vp_1 )); \\
  ST_x(\forall y\, \vp_1) & = & \forall y\,(\neg W(y)\wedge D(x,y) \imp
                              ST_x(\vp_1)),
\end{array}
$$
\noindent where the arity of $P'$ is one greater than the arity of
$P$; the letter $P'$ is distinct from the letter $Q'$ if, and only if,
$P$ is distinct from $Q$; and all the newly introduced individual
variables are distinct from the previously used ones.  Intuitively,
the variable $x$ in $ST_x(\vp)$ stands for the world at which $\vp$ is
being evaluated.

Let
$$
\begin{array}{rcl}
M & = &
\exists x\,W(x)\wedge\forall x\,[W(x)\imp\exists y\,D(x,y)]
\wedge
\\
& &
{\wedge}\,\forall x\forall y\forall z\,
[W(x)\wedge W(y)\wedge \neg W(z) \wedge R(x,y) \wedge
D(x,z) \imp D(y,z)].
\end{array}
$$
The formula $M$
describes general properties of predicate Kripke frames; it says that
the set of worlds is non-empty, that the domain of every world is
non-empty, and that, if world $y$
is accessible from world $x$,
the domain of $x$ is included in the domain of $y$.

Let $F_n$ be a classical first-order formula in the signature
$\{R, =\}$ describing, for a fixed number $n \geqslant 2$, the
disjoint union $\frak{F}^{\uplus}_n$ of all the frames
$\frak{F}_m \in \frak{C}_0$ such that $m \leqslant n$.
Since $\frak{F}^{\uplus}_n$ is finite, the formula $F_n$ can be
effectively constructed,---all we need to do is say which worlds exist
in $\frak{F}^{\uplus}_n$, that those worlds are pairwise distinct,
that there are no other worlds in $\frak{F}^{\uplus}_n$, and describe
which worlds are, and which are not, related by the accessibility
relation.

Next, for an arbitrary classical first-order formula $\Phi$ in the
signature $\{R, =\}$, inductively define the formula $\Phi^\ast$, as
follows:

$$
\begin{array}{rcl}
\label{F*C}
  (x = y)^\ast & = & (x = y); \\
  {(R(x,y))}^{\ast} & = & R(x,y); \\
  {(\Phi_1 \con \Phi_2)}^\ast & = & \Phi_1^\ast \con \Phi_2^\ast; \\
  (\neg \Phi_1)^\ast & = & \neg \Phi_1^\ast; \\
  {(\forall x\,\Phi_1)}^\ast & = & \forall x\,(W(x)\imp\Phi_1^\ast).
\end{array}
$$

Lastly, given a predicate modal formula $\vp$, define

$$
\begin{array}{rcl}
  \widehat{\vp} & = & {M}\wedge F_{md(\vp) + 3}^\ast \imp \forall x\, (W(x) \imp ST_x(\varphi)).
\end{array}
$$

\begin{lemma}
\label{lem:Lw0intoQClE}
For every closed predicate modal formula $\vp$, the following holds:
$\vp \in L_0$ if, and only if, $\widehat{\vp} \in {\bf QCl}_=$.
\end{lemma}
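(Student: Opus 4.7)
The plan is to prove both directions of the biconditional using the standard translation as a bridge, exploiting the design of $\widehat{\vp}$: the antecedent $M \wedge F_{md(\vp)+3}^*$ pins down classical models whose $W$-part is the disjoint union $\frak{F}^{\uplus}_{md(\vp)+3}$ of all $\frak{F}_{2k} \in \frak{C}_0$ with $2k \leqslant md(\vp)+3$, equipped with an arbitrary assignment of domains and predicate interpretations encoded via $D$ and the letters $P'$.

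For the direction from left to right, I would take any classical model $\mathcal{N}$ of $M \wedge F_{md(\vp)+3}^*$ and extract from it a predicate Kripke model $\frak{M}$ whose underlying frame is $\frak{F}^{\uplus}_{md(\vp)+3}$: worlds are the $W$-elements of $\mathcal{N}$, accessibility is the restriction of $R$ to worlds, the domain at a world $w$ is $\{a : \mathcal{N} \models \neg W(a) \wedge D(w,a)\}$, and the interpretation of an $n$-ary letter $P$ at $w$ is read off from $P'$. The conjunct $M$ ensures $\frak{M}$ is a legitimate predicate Kripke model; a routine induction on $\vp$ then yields the standard translation correspondence, namely $\frak{M}, w \models^g \vp$ iff $ST_x(\vp)$ holds in $\mathcal{N}$ under the extension of $g$ sending $x$ to $w$. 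Since each component of $\frak{F}^{\uplus}_{md(\vp)+3}$ belongs to $\frak{C}_0$ and truth of $\vp$ at a world of a disjoint union coincides with its truth in the component's submodel (worlds in different components never see each other), the assumption $\vp \in L_0$ forces $\vp$ true throughout $\frak{M}$, so $\widehat{\vp}$ holds in $\mathcal{N}$.

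For the converse I would argue contrapositively: if $\vp \notin L_0$, then $\vp$ fails at some world $w_0$ of a predicate Kripke model $\frak{M}_0$ based on some $\frak{F}_{2k}$. The key reduction is that one may assume $2k \leqslant md(\vp)+3$. Writing $n = md(\vp)$, the $n$-neighborhood of $w_0$ in $\frak{F}_{2k}$ is isomorphic to the corresponding neighborhood of some $w_0'$ in some $\frak{F}_{2m}$ with $2m \leqslant n+3$: if $w_0 = w^*$, take $m=1$; if $w_0 = w_1$, then $(\frak{F}_{2k}, w_1)$ and $(\frak{F}_{2m}, w_1)$ are $n$-bisimilar whenever $2k, 2m \geqslant n+1$, so pick the even $2m \in \{n+1,n+2,n+3\}$ meeting this bound; if $w_0 = w_j$ with $j \geqslant 2$, the $n$-neighborhood is a linear chain whose length and terminal behavior (whether it reaches the dead end $w_{2k}$) can be matched in some $\frak{F}_{2m}$ with $2m \leqslant n+3$ by choosing $j' \geqslant 2$ appropriately. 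Transporting the interpretation of $\frak{M}_0$ along this isomorphism and extending arbitrarily on the remaining worlds yields a model based on $\frak{F}_{2m}$ refuting $\vp$ at $w_0'$. From this I would assemble a classical countermodel to $\widehat{\vp}$ by interpreting the other components of $\frak{F}^{\uplus}_{md(\vp)+3}$ with arbitrary predicate models and reading off $W, R, D, P'$ in the natural way; by the standard translation correspondence this satisfies $M \wedge F_{md(\vp)+3}^*$ while falsifying $\forall x\,(W(x) \imp ST_x(\vp))$, so $\widehat{\vp} \notin \mathbf{QCl}_=$.

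The main obstacle is verifying that the bound $md(\vp)+3$ actually suffices in the reduction: one must check that the parity constraint on $2m$ (required to land in $\frak{C}_0$) combined with the constraint $j' \geqslant 2$ (required to avoid introducing the spurious $w^*$-successor at $w_1$) always leaves an admissible choice of $2m$ within $[n+1, n+3]$ in the generic case and within $[2k-j+2, n+3]$ in the dead-end case. This bookkeeping is precisely what forces the index $md(\vp)+3$ in the definition of $\widehat{\vp}$; the remaining ingredients, namely the standard translation correspondence and preservation of modal truth on components of a disjoint union and under $n$-bisimulation for formulas of depth at most $n$, are routine.
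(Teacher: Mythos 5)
Your proposal is correct and takes essentially the same route as the paper: both directions go through the standard translation, and the converse rests on the same key reduction, namely that the forward $md(\vp)$-neighborhood of the refuting world can be reproduced, up to isomorphism, inside some $\frak{F}_{2m}$ with $2m \leqslant md(\vp)+3$ (the paper realizes this by explicit surgery --- truncating the chain ahead of the evaluation point and collapsing the prefix behind it to one or two fresh worlds to restore even length --- whereas you locate an isomorphic copy of the neighborhood in a small frame directly). The parity bookkeeping you single out as the main obstacle is precisely the case analysis the paper carries out, and your bounds check out.
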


\begin{proof}
  For the left-to-right, suppose that
  $\widehat{\vp} \notin {\bf QCl}_=$, i.e., $\widehat{\vp}$ fails in
  some classical first-order model $\frak{A}$.  We build a Kripke
  model, based on a $\frak{C}_0$-frame, refuting $\vp$.  Since
  $\frak{A} \Vdash M$ and $\frak{A} \Vdash F^{\ast}_{md(\vp) + 3}$, we
  can construct from $\frak{A}$ a predicate Kripke frame $\frak{F}_D$
  based on a frame isomorphic to $\frak{F}^{\uplus}_{md(\vp) + 3}$.
  Since $\frak{A} \not\Vdash \forall x\, (W(x) \imp ST_x(\varphi))$,
  for some assignment $g$, both $\frak{A} \Vdash^g W(x)$ and
  $\frak{A} \not\Vdash^g ST_x(\varphi)$ hold.  Recall that the
  standard translation has the property (see, e.g., \cite[Lemma
  3.12.2]{GShS}) that $\frak{A} \Vdash^g ST_x(\vp)$ if, and only if,
  \sat{M}{v}{\vp},
  where $\frak{A}$ and $\frak{M}$ are, respectively, classical and
  Kripke models agreeing on the interpretation of the predicate
  letters of $\vp$ and where $v$ is the world corresponding to the
  element $g(x)$ of $\frak{A}$.
  Therefore, we conclude that $\frak{M}, w \not\models \vp$, where
  $\frak{M}$ is a model based on $\frak{F}_{D}$ and $w$ is the world
  corresponding, under the isomorphism, to $g(x)$.  Since $\frak{M}$
  is based on a frame isomorphic to the frame
  $\frak{F}^{\uplus}_{md(\vp) + 3}$, which is a disjoint union of
  $\frak{C}_0$-frames, there exists a model $\frak{M}'$ based on a
  $\frak{C}_0$-frame
  such that $\frak{M}', w \not\models \vp$. Thus,
  $\vp \notin L_0$.


  For the converse, suppose that $\vp \not\in L_0$.  Then,
  $\frak{M}, \widehat{w} \not\models \vp$, for some model $\frak{M}$
  based on a $\frak{C}_0$-frame and some world $\widehat{w}$ in
  $\frak{M}$.  We show that, then, $\vp$ fails in a model based on a
  $\frak{C}_0$-frame $\frak{F}_m$, where $m \leqslant md(\vp) + 3$.


  If $\widehat{w} = w_1$, i.e., $\widehat{w}$ is the root of the frame
  underlying $\frak{M}$, we define the sought model as
  follows. Consider the chain $w_1 R w_2 R \ldots$ of worlds of
  $\frak{M}$; if it contains more than $s$ worlds, where
  $$
  s =
  \begin{cases}
    md(\vp) + 1, \mbox{~~if } md(\vp) \mbox{ is odd}, \\
    md(\vp) + 2, \mbox{~~if } md(\vp) \mbox{ is even},
  \end{cases}
  $$
  cut it off at $w_s$ to obtain the chain $w_1 R w_2 R \ldots R w_s$
  together with $w^\ast$ such that $w_1 R' w^\ast$.  Since
  $s \leqslant md(\vp) + 3$, we obtain that
  $\frak{M}', w_1 \not\models \vp$, for a model $\frak{M}'$ based on
  one of the disjunct frames from $\frak{F}^{\uplus}_{md(\vp) + 3}$.

  If, on the other hand, $\widehat{w}$ is not the root of the frame
  underlying $\frak{M}$, i.e., $\widehat{w} = w_k$, for some $k > 1$
  (notice that the case $\widehat{w} = w^\ast$ is identical to $k$
  being the maximal index of a world in $\frak{M}$), we define the
  sought model as follows. First, if the chain
  $w_k R w_{k+1} R \ldots$ contains more than $md(\vp) + 1$ worlds, we
  cut it off at $w_{k + md(\vp)}$.  Second, if $md(\vp)$ is even, we
  replace the chain $w_1 R \ldots R w_{k-1}$ together with $w^\ast$
  accessible from $w_1$ by worlds $w_0$ and $w^\ast$ with the
  accessibilities $w_0 R' w_k $ and $w_0 R w^\ast$; if, on the other
  hand, $md(\vp)$ is odd, we put in an additional world between $w_0$
  and $w_k$, so that the length of the resultant chain is even. In
  either case, the length of the chain of worlds in $\frak{M}'$ does
  not exceed $md(\vp) + 3$.  Hence, $\frak{M}', w_k \not\models \vp$,
  for a model $\frak{M}'$ based on one of the disjunct frames from
  $\frak{F}^{\uplus}_{md(\vp) + 3}$.

  Therefore, due to the aforementioned property of the standard
  translation, there exists a classical first-order model $\frak{A}$
  such that
  $\frak{A} \not\Vdash {M}\wedge F_{md(\vp) + 3}^\ast \imp \forall x\,
  (W(x) \imp ST_x(\varphi))$.
  Thus, we conclude that $\widehat{\vp} \notin {\bf QCl}_=$, as
  required.
\end{proof}

\begin{lemma}
  \label{lem:L0re}
  Logic $L_0$ is recursively enumerable.
\end{lemma}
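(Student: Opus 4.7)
The plan is to deduce the lemma as an essentially immediate corollary of Lemma~\ref{lem:Lw0intoQClE} together with the well-known fact that the set of theorems of ${\bf QCl}_=$ is recursively enumerable.

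First, I would observe that the translation $\vp \mapsto \widehat{\vp}$ is effectively computable. The sentence $M$ is a fixed first-order formula; the standard translation $ST_x$ and the operation $(\cdot)^\ast$ are defined by structural recursion on their inputs; the modal depth $md(\vp)$ is trivially computable from $\vp$; and the first-order sentence $F_n$ describing the finite frame $\frak{F}^{\uplus}_n$ can be uniformly constructed from $n$, because $\frak{F}^{\uplus}_n$ is a finite object whose worlds and accessibility pairs can be effectively listed, as noted in the paragraph defining $F_n$. Composing these operations, the function $\vp \mapsto \widehat{\vp}$ is computable.

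Second, I would extend the equivalence of Lemma~\ref{lem:Lw0intoQClE} from closed formulas to arbitrary ones. For this, note that $\vp \in L_0$ iff $\forall \bar{y}\,\vp \in L_0$, where $\bar{y}$ enumerates the free variables of $\vp$: the forward direction uses closure of $L_0$ under generalization, and the reverse direction uses the classical first-order validity $\forall \bar{y}\,\vp \imp \vp$, which lies in $L_0$, together with modus ponens. Hence, writing $\vp^c$ for the universal closure of $\vp$, we have $\vp \in L_0$ iff $\widehat{\vp^c} \in {\bf QCl}_=$.

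Third, since ${\bf QCl}_=$ is recursively enumerable and $\vp \mapsto \widehat{\vp^c}$ is computable, the preimage of ${\bf QCl}_=$ under this map is recursively enumerable; but by the above this preimage is exactly $L_0$, giving the conclusion. I do not anticipate a real obstacle: the only mildly non-trivial point is spelling out that $F_n$ is uniformly computable from $n$, which reduces to the fact that the finite frames $\frak{F}_{2k}$ with $2k \leqslant n$ are themselves uniformly computable from $n$.
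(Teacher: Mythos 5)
Your proposal is correct and follows exactly the route the paper takes: the paper's proof of this lemma is simply ``Immediate from Lemma~\ref{lem:Lw0intoQClE},'' and you have spelled out the (routine) details of that reduction, including the computability of $\vp \mapsto \widehat{\vp}$ and the reduction of arbitrary formulas to closed ones via universal closure, which the paper leaves implicit.
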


\begin{proof}
  Immediate from Lemma~\ref{lem:Lw0intoQClE}.
\end{proof}

It remains to show that $L_0$ is not complete with respect to an
elementary class of rooted frames.

Given a positive integer $n$, let
$$
\alpha_n = \Diamond \Box {\bottom} \con \Diamond^n \Box {\bottom}.
$$
It is easy to check that the formula $\alpha_n$ is valid at worlds
that see a dead end in one step and also in $n$ steps.

\begin{lemma}
  \label{lem:alphas}
  Let $n$ be a positive integer. Then, $\neg \alpha_n \in L_0$ if, and
  only if, $n$ is even.
\end{lemma}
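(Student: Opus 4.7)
The plan is to determine, for each $\frak{F}_m$, exactly which worlds of $\frak{F}_m$ satisfy $\alpha_n$, and then to note that satisfiability somewhere in some $\frak{F}_{2k} \in \frak{C}_0$ reduces to an arithmetic condition on $n$. Since $\alpha_n$ contains no predicate letters, truth depends only on the underlying frame, so I can reason purely about $\frak{F}_m = \langle W_m, R_m \rangle$.

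First I would catalogue the dead ends of $\frak{F}_m$: these are exactly $w_m$ and $w^\ast$, so $\Box\bottom$ holds precisely at these two worlds. Exploiting that $w_i$ (for $2 \leqslant i \leqslant m-1$) has the unique successor $w_{i+1}$, while $w_1$ has the two successors $w_2$ and $w^\ast$, a straightforward induction on $k \geqslant 1$ yields: $\Diamond^k \Box \bottom$ fails at $w_m$ and $w^\ast$; it holds at $w_i$ (for $2 \leqslant i \leqslant m-1$) iff $i + k = m$; and it holds at $w_1$ iff either $k = 1$ (stepping directly to the dead end $w^\ast$) or $k = m - 1$ (traversing the long chain to $w_m$).

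Specializing to $k = 1$, the formula $\Diamond \Box \bottom$ is true precisely at $w_1$ and $w_{m-1}$. Hence $\alpha_n = \Diamond\Box\bottom \con \Diamond^n \Box \bottom$ can hold at most at $w_1$ or $w_{m-1}$, and combining with the catalogue above: $\alpha_n$ holds at $w_{m-1}$ iff $n = 1$, and $\alpha_n$ holds at $w_1$ iff $n = 1$ or $n = m - 1$.

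Putting this together, $\alpha_n$ is satisfiable at some world of some $\frak{F}_{2k} \in \frak{C}_0$ iff $n = 1$ or $n = 2k - 1$ for some $k \geqslant 1$, i.e., iff $n$ is odd. Equivalently, $\neg \alpha_n$ is valid on every $\frak{C}_0$-frame---that is, $\neg \alpha_n \in L_0$---iff $n$ is even, as required. The argument is almost entirely a finite case analysis on the simple shape of $\frak{F}_m$; the only mildly delicate point is the corner case $n = 1$, handled uniformly by observing that $\alpha_1$ coincides with $\Diamond \Box \bottom$ and is satisfied at $w_1$ of every $\frak{F}_{2k}$.
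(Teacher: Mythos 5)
Your proposal is correct and follows essentially the same route as the paper's proof: identify that $\Diamond\Box\bottom$ can only hold at $w_1$ and $w_{m-1}$ of $\frak{F}_m$, observe that from these worlds a dead end is reachable only in $1$ or $m-1$ steps (both odd when $m$ is even), and for odd $n$ exhibit $\frak{F}_{n+1}\in\frak{C}_0$ as a witness. Your version is just a more systematic tabulation of where $\Diamond^k\Box\bottom$ holds; the substance is identical.
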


\begin{proof}
  Suppose $n$ is even.  Let $\frak{F}_m \in \frak{C}_0$ and let $w$ be
  a world in $\frak{F}_m$.  Assume that
  $\frak{F}_m, w \models \Diamond \Box {\bottom}$. Then, either
  $w = w_1$ or $w = w_{m-1}$ (notice that $m \geqslant 2$, since
  $\frak{F}_m \in \frak{C}_0$).  Neither $w_1$ nor $w_{m-1}$, however,
  can see a dead end in an even number of steps. Indeed, $w_{m-1}$ can
  only see a dead end in one step.  As for $w_1$, it can see two dead
  ends---one of them in one step, the other in $m-1$ steps; but $m-1$
  is odd, since $\frak{F}_m \in \frak{C}_0$ implies that $m$ is even.
  Therefore, $\frak{F}_m, w \models \neg \alpha_n$ holds for every $w$
  in $\frak{F}_m$.  Thus, $\neg \alpha_n \in L_0$.

  Suppose $n$ is odd. Then, $\frak{F}_{n+1}, w_1 \models \alpha_n$
  and, hence, $\frak{F}_{n+1}, w_1 \not\models \neg \alpha_n$. Since
  $\frak{F}_{n+1} \in \frak{C}_0$, we conclude that
  $\neg \alpha_n \notin L_0$.
\end{proof}

Now, let
$$
\begin{array}{rcccl}
  alt_2 & = & \Box p_1 \dis \Box (p_1 \imp p_2) \dis \Box (p_1 \con
              p_2 \imp p_3),
\end{array}
$$
where $p_1$, $p_2$, and $p_3$ are pairwise distinct propositional
variables, i.e., $0$-ary predicate letters.  The formula $alt_2$ is
valid at worlds that see at most two worlds (see, e.g.,
\cite[Proposition 3.45]{ChZ}).  Hence, $alt_2 \in L_0$.


\begin{lemma}
  \label{lem:L0notFOdef}
  Logic $L_0$ is not complete with respect to an elementary class of
  rooted frames.
\end{lemma}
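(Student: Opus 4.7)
The plan is to assume for contradiction that $L_0 = L(\frak{C})$ for some elementary class $\frak{C}$ of rooted frames, defined by a first-order sentence $\Phi$ of quantifier rank $q$ in the signature $\{R,=\}$, and to derive a contradiction by showing that $\frak{C}$ must contain arbitrarily large frames of the form $\frak{F}_{2n}$, hence (by an Ehrenfeucht--Fra\"iss\'e argument) also some $\frak{F}_{2n+1}$, which refutes $\neg\alpha_{2n}\in L_0$.

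The first step is to tighten the structural constraints on $\frak{C}$-frames. Besides $alt_2$, I would point out that $\Box\, alt_1 \in L_0$: in each $\frak{F}_{2n}$ every non-root world has at most one successor, so $alt_1$ is true at every successor of every world. Together $alt_2$ and $\Box\, alt_1$ force every $\frak{C}$-frame to have at most $2$ successors at its root and at most $1$ successor at every other world. The second step invokes Lemma~\ref{lem:alphas}: for each $k \geq 1$ we have $\neg\alpha_{2k+1} \notin L_0$, so some $\frak{G}_k \in \frak{C}$ has a world $v_k$ satisfying $\alpha_{2k+1}$. The branching constraints force $v_k$ to be the root of $\frak{G}_k$ (a non-root world has a unique successor; if that successor is a dead end then nothing at distance $\geq 2$ is reachable, ruling out $\alpha_{2k+1}$ for $k \geq 1$), and then $v_k$ must have exactly two successors, one a dead end and the other beginning a chain of length $2k+1$ ending in a further dead end. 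No additional worlds can appear in the rooted frame $\frak{G}_k$, so $\frak{G}_k \cong \frak{F}_{2k+2}$, and in particular $\frak{F}_{2n} \in \frak{C}$ for every $n \geq 2$.

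The last step is a standard Ehrenfeucht--Fra\"iss\'e argument on chain-like $\{R,=\}$-structures: for $2n-1 \geq 2^{q}$, the rooted frames $\frak{F}_{2n}$ and $\frak{F}_{2n+1}$ are $q$-equivalent, since they share an identical local side-step $w^\ast$ at the root and differ only by one unit in the length of a long main chain. For such $n$, $\frak{F}_{2n}\models \Phi$ yields $\frak{F}_{2n+1}\models \Phi$, so $\frak{F}_{2n+1} \in \frak{C}$ and therefore $\frak{F}_{2n+1} \models L_0$. But in $\frak{F}_{2n+1}$ the root $w_1$ sees dead ends at distance $1$ and at the even distance $2n$, so $\frak{F}_{2n+1} \not\models \neg\alpha_{2n}$, contradicting Lemma~\ref{lem:alphas}. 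The main obstacle is the structural analysis in step two, where I must verify $\Box\, alt_1 \in L_0$ and then read off, from the branching constraints and the $\alpha_{2k+1}$-witness, that $\frak{G}_k$ is forced to be isomorphic to $\frak{F}_{2k+2}$; once arbitrarily large $\frak{F}_{2n}$ are in $\frak{C}$, the EF game and the final parity contradiction are routine.
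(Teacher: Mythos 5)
Your proof is correct and follows essentially the same route as the paper: use formulas valid in $L_0$ ($alt_2$ together with a ``boxed at-most-one-successor'' formula---your $\Box\,alt_1$ plays exactly the role of the paper's $\Box(\Diamond p \imp \Box p)$) to show that any $\frak{C}$-frame refuting $\neg\alpha_m$ for odd $m$ must be isomorphic to $\frak{F}_{m+1}$, and then invoke the first-order inexpressibility of parity on long chains to reach a contradiction. The only differences are cosmetic: you skip the paper's redundant auxiliary formula $\neg\zeta \imp \Diamond\Box\bot$ (the dead-end successor already comes from the $\Diamond\Box\bot$ conjunct of $\alpha_m$) and you derive the contradiction directly from $\frak{F}_{2n+1}\in\frak{C}$ rather than first proving the full even/odd biconditional.
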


\begin{proof}
  Assume otherwise, i.e, let $L_0$ be complete with respect to an
  elementary class $\frak{C}$ of rooted frames.

  First, we prove that, for every $n\geqslant 3$,
  $$
  \frak{F}_n\in \frak{C}~ \Longleftrightarrow ~ \mbox{$n$ is even.}
  $$
  Suppose $n$ is odd. Then, due to Lemma~\ref{lem:alphas},
  $\neg\alpha_{n-1}\in L_0$. Since
  $\frak{F}_n, w_1 \not\models\neg\alpha_{n-1}$, we obtain
  $\frak{F}_n\not\in \frak{C}$.

  Suppose $n$ is even. Then, due to Lemma~\ref{lem:alphas},
  $\neg\alpha_{n-1}\not\in L_0$. Thus, there exists a rooted frame
  $\frak{F}' \in\frak{C}$ and a world $w$ such that
  $\frak{F}',w \models\alpha_{n-1}$. We show that, up to isomorphism,
  $\frak{F}' = \frak{F}_n$.

  Let $\zeta = \Diamond p \imp \Box p$.  It is easy to check that
  $\zeta$ is valid at a world $w$ if, and only if, $w$ sees at most
  one world.  Since only the roots of $\frak{C}_0$-frames see more
  than one world,
  $\Box \zeta, \neg \zeta \imp \Diamond \Box {\bottom} \in L_0$.
  Since
  $alt_2, \Box \zeta, \neg \zeta \imp \Diamond \Box {\bottom} \in
  L_0$,
  if $\frak{F} \in \frak{C}$, then $\frak{F}$ has a branching degree
  of at most two, no world in $\frak{F}$ seen from another world sees
  more than one world (i.e., only the root of $\frak{F}$ may see two
  worlds), and if the root of $\frak{F}$ sees two worlds, one of them
  is a dead end.
  Since $\frak{F}', w \models\alpha_{n-1}$, the world $w$ is the root
  of $\frak{F}'$, and $w$ sees dead ends in one and $n-1$
  steps. Therefore, as claimed, $\frak{F}'$ is isomorphic to
  $\frak{F}_n$.

    Now, to obtain a contradiction, it remains to notice that
    classical first-order formulas cannot distinguish even and odd
    linear orders (see, e.g., \cite[Corollary 3.12]{Libkin}).
\end{proof}

We, thus, obtain the following:

\begin{theorem}
  There exists a normal predicate modal logic which is Kripke
  complete, recursively enumerable, and not complete with respect to
  an elementary class of rooted frames.
\end{theorem}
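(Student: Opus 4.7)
The plan is to take the logic $L_0 = L(\mathfrak{C}_0)$ constructed earlier in this section as the witness and verify the three required properties by assembling the lemmas already established. Since the substantive work has been done, the argument amounts to stitching together the ingredients and explaining why each applies.

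First, Kripke completeness holds by construction, since $L_0$ is \emph{defined} as the set of formulas valid on the class $\mathfrak{C}_0$ of frames. Second, recursive enumerability is exactly Lemma~\ref{lem:L0re}, whose proof is a one-liner given Lemma~\ref{lem:Lw0intoQClE}: the effective translation $\varphi \mapsto \widehat{\varphi}$ reduces $L_0$ to ${\bf QCl}_=$, so the recursive enumerability of the latter transfers to $L_0$. The intuition behind this translation, which I would reiterate if more detail were required, is that the modal depth of $\varphi$ bounds how much of the infinite family $\{\mathfrak{F}_{2n} : n \geqslant 1\}$ needs to be inspected in order to detect a refutation, so $L_0$-membership can be checked on the disjoint union $\mathfrak{F}^{\uplus}_{md(\varphi)+3}$ of an initial segment of $\mathfrak{C}_0$, which is a \emph{finite} frame describable by a classical first-order formula.

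The main obstacle, and the only nontrivial clause, is showing non-completeness with respect to any elementary class of rooted frames; this is Lemma~\ref{lem:L0notFOdef}. The heart of that argument, which I would invoke here, is that the formulas $\neg \alpha_n$ (for odd $n$) together with $alt_2$, $\Box \zeta$, and $\neg \zeta \imp \Diamond \Box \bottom$ pin down any putative elementary class $\mathfrak{C}$ characterizing $L_0$ so tightly that, within the scale $\{\mathfrak{F}_n : n \geqslant 3\}$, exactly the frames with $n$ even belong to $\mathfrak{C}$. A classical first-order sentence in the signature $\{R, =\}$ would then have to separate linear structures of even length from those of odd length, which is impossible by the well-known inexpressibility result (cited as \cite[Corollary 3.12]{Libkin}). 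This contradiction yields the third property, and combining the three properties gives the theorem.
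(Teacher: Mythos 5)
Your proposal is correct and matches the paper's own proof, which simply takes $L_0$ as the witness and appeals to the facts that $L_0 = L(\frak{C}_0)$ is Kripke complete by definition, recursively enumerable by Lemma~\ref{lem:L0re}, and not complete with respect to an elementary class of rooted frames by Lemma~\ref{lem:L0notFOdef}. Your summaries of the supporting lemmas are accurate, so nothing further is needed.
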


\begin{proof}
  Take $L_0$ as such a logic.
\end{proof}

We next show that the assumption of only considering rooted frames is
essential to the example presented above.

\begin{figure}
\centering
\begin{tikzpicture}[scale=1.5]

\coordinate (w1)   at (+0.0000,+0.0000);
\coordinate (w2)   at (+1.0000,+0.0000);
\coordinate (w3)   at (+2.0000,+0.0000);
\coordinate (w4)   at (+3.0000,+0.0000);
\coordinate (w5ph) at (+4.0000,+0.0000);
\coordinate (dts)  at (+4.5000,+0.0000);
\coordinate (w6ph) at (+5.0000,+0.0000);
\coordinate (w2k-1)at (+6.0000,+0.0000);
\coordinate (w2k)  at (+7.0000,+0.0000);
\coordinate (y)    at (+8.0000,+0.0000);
\coordinate (x)    at (+9.0000,+0.0000);
\coordinate (z)    at (+9.0000,-1.0000);
\coordinate (w*)   at (+0.0000,+1.0000);
\coordinate (w2*)  at (+1.0000,-1.0000);
\coordinate (w4*)  at (+3.0000,-1.0000);
\coordinate (w2k*) at (+7.0000,-1.0000);

\coordinate (p1)   at ( -0.3,-0.40);
\coordinate (p2)   at (1-0.3,-0.40);
\coordinate (p3)   at (1-0.3,-1.45);
\coordinate (p4)   at (1+0.3,-1.45);
\coordinate (p5)   at (1+0.3,+0.40);
\coordinate (p6)   at ( +0.3,+0.40);
\coordinate (p7)   at ( +0.3,+1.40);
\coordinate (p8)   at ( -0.3,+1.40);

\coordinate (p11)  at (2  -0.3,-0.40);
\coordinate (p21)  at (2+1-0.3,-0.40);
\coordinate (p31)  at (2+1-0.3,-1.45);
\coordinate (p41)  at (2+1+0.3,-1.45);
\coordinate (p51)  at (2+1+0.3,+0.40);
\coordinate (p61)  at (2  -0.3,+0.40);

\coordinate (p12)  at (6  -0.4,-0.40);
\coordinate (p22)  at (6+1-0.3,-0.40);
\coordinate (p32)  at (6+1-0.3,-1.45);
\coordinate (p42)  at (6+1+0.3,-1.45);
\coordinate (p52)  at (6+1+0.3,+0.40);
\coordinate (p62)  at (6  -0.4,+0.40);

\coordinate (p13)  at (8  -0.3,-0.40);
\coordinate (p23)  at (8+1-0.3,-0.40);
\coordinate (p33)  at (8+1-0.3,-1.45);
\coordinate (p43)  at (8+1+0.3,-1.45);
\coordinate (p53)  at (8+1+0.3,+0.40);
\coordinate (p63)  at (8  -0.3,+0.40);

\coordinate (text0) at (0.3,1.00);
\coordinate (text1) at (1.5,0.45);
\coordinate (text2) at (5.5,0.45);
\coordinate (text3) at (7.5,0.45);
\node [above right] at (text0)   {{``bottom part''} } ;
\node [above right] at (text1)   {{``regular part''}} ;
\node [above right] at (text2)   {{``regular part''}} ;
\node [above right] at (text3)   {{``regular part''}} ;

\begin{scope}[>=latex]
\draw [->,  shorten >= 1.5pt, shorten <= 1.5pt]
(w1) -- (w2);
\draw [->,  shorten >= 1.5pt, shorten <= 1.5pt]
(w2) -- (w3);
\draw [->,  shorten >= 1.5pt, shorten <= 1.5pt]
(w3) -- (w4);
\draw [->,  shorten >= 1.5pt, shorten <= 1.5pt]
(w4) -- (w5ph);
\draw [->,  shorten >= 1.5pt, shorten <= 1.5pt]
(w6ph) -- (w2k-1);
\draw [->,  shorten >= 1.5pt, shorten <= 1.5pt]
(w2k-1) -- (w2k);
\draw [->,  shorten >= 1.5pt, shorten <= 1.5pt]
(w2k) -- (y);
\draw [->,  shorten >= 1.5pt, shorten <= 1.5pt]
(y) -- (x);
\draw [->,  shorten >= 1.5pt, shorten <= 1.5pt]
(z) -- (x);
\draw [->,  shorten >= 1.5pt, shorten <= 1.5pt]
(w2*) -- (w2);
\draw [->,  shorten >= 1.5pt, shorten <= 1.5pt]
(w4*) -- (w4);
\draw [->,  shorten >= 1.5pt, shorten <= 1.5pt]
(w2k*) -- (w2k);
\draw [->,  shorten >= 1.5pt, shorten <= 1.5pt]
(w1) -- (w*);
\end{scope}

\draw[dashed, rounded corners=1.5] 
(p1) -- (p2) -- (p3) -- (p4) -- (p5) -- (p6) -- (p7) -- (p8) -- cycle;
\draw[dashed, rounded corners=1.5]
(p11) -- (p21) -- (p31) -- (p41) -- (p51) -- (p61) -- cycle;
\draw[dashed, rounded corners=1.5]
(p12) -- (p22) -- (p32) -- (p42) -- (p52) -- (p62) -- cycle;
\draw[dashed, rounded corners=1.5]
(p13) -- (p23) -- (p33) -- (p43) -- (p53) -- (p63) -- cycle;

\node [below      ] at (w1)   {${w_1}$}         ;
\node [above      ] at (w2)   {${w_2}$}         ;
\node [above      ] at (w3)   {${w_3}$}         ;
\node [above      ] at (w4)   {${w_4}$}         ;
\node [above      ] at (w*)   {${w^\ast}$}      ;
\node [           ] at (dts)  {${\cdots}$}      ;
\node [above      ] at (w2k-1){${w_{2k-1}}$}    ;
\node [above      ] at (w2k)  {${w_{2k}}$}      ;
\node [above      ] at (y)    {${y}$}           ;
\node [above      ] at (x)    {${x}$}           ;
\node [below      ] at (z)    {${z}$}           ;
\node [below      ] at (w2*)  {${w_2^\ast}$}    ;
\node [below      ] at (w4*)  {${w_4^\ast}$}    ;
\node [below      ] at (w2k*) {${w_{2k}^\ast}$} ;

\filldraw [] (w1)   circle [radius=1.5pt];
\filldraw [] (w2)   circle [radius=1.5pt];
\filldraw [] (w3)   circle [radius=1.5pt];
\filldraw [] (w4)   circle [radius=1.5pt];
\filldraw [] (w*)   circle [radius=1.5pt];
\filldraw [] (w2k-1)circle [radius=1.5pt];
\filldraw [] (w2k)  circle [radius=1.5pt];
\filldraw [] (x)    circle [radius=1.5pt];
\filldraw [] (y)    circle [radius=1.5pt];
\filldraw [] (z)    circle [radius=1.5pt];
\filldraw [] (w2*)  circle [radius=1.5pt];
\filldraw [] (w4*)  circle [radius=1.5pt];
\filldraw [] (w2k*) circle [radius=1.5pt];

\end{tikzpicture}

\caption{Regular construction for $\frak{C}_0^\ast$-frames}
\label{C0ast}
\end{figure}
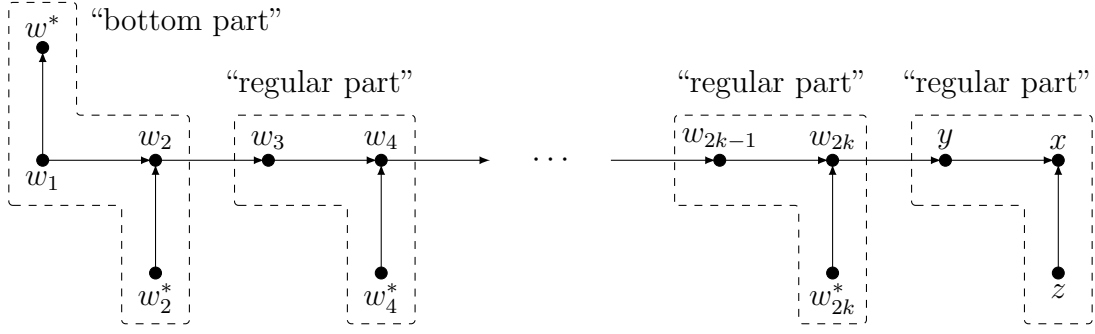

\begin{proposition}
  There exists an elementary class $\frak{C}^\ast_0$ of frames which
  are not rooted such that $L_0 = L(\frak{C}^\ast_0)$.
\end{proposition}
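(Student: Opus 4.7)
The plan is to formalize the regular construction of Figure~\ref{C0ast} as an elementary class $\frak{C}^\ast_0$ and to show $L(\frak{C}^\ast_0) = L_0$. A frame in $\frak{C}^\ast_0$ consists of a ``bottom part'' with root $w_1$, chain successor $w_2$, dead-end successor $w^\ast$, and side root $w_2^\ast$ pointing to $w_2$; a (possibly empty, possibly infinite) sequence of ``middle regular parts'' each contributing chain worlds $w_{2i-1}, w_{2i}$ together with a side root $w_{2i}^\ast \to w_{2i}$; and, in the finite case, a ``cap'' consisting of chain worlds $y, x$ with $x$ a dead end, plus a side root $z \to x$. Because all the defining conditions are local to each world and its immediate $R$-neighbourhood---bounded out-degrees, branching worlds seeing a dead end, distinctive first-order characterisations of each root-type world and of the cap, and the local matching between chain worlds and their attached side roots---the class $\frak{C}^\ast_0$ is definable by a first-order sentence $\Phi^\ast_0$ in the signature $\{R,=\}$, hence elementary.

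For the inclusion $L(\frak{C}^\ast_0) \subseteq L_0$, it suffices to observe that each $\frak{F}_{2n}\in \frak{C}_0$ occurs as a point-generated subframe of some $\frak{C}^\ast_0$-frame---take the construction with $n-1$ middle regular parts followed by the cap, and generate from $w_1$. Since modal validity transfers to generated subframes, $L(\frak{C}^\ast_0) \subseteq \bigcap_{n\geqslant 1} L(\frak{F}_{2n}) = L_0$.

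For the reverse inclusion $L_0 \subseteq L(\frak{C}^\ast_0)$, I would classify the point-generated subframes of an arbitrary $\frak{C}^\ast_0$-frame. From the bottom root $w_1$ the generated subframe is isomorphic to some $\frak{F}_{2n}\in\frak{C}_0$, so any $\varphi \in L_0$ is valid there. From any other starting world---a side root $w_{2i}^\ast$ or $z$, an interior chain world $w_j$ with $j\geqslant 2$, or a dead end---the generated subframe is a straight chain (finite, or, in infinite constructions forced on us by compactness, infinite) or a singleton. A finite straight chain of length $m$ is isomorphic to the tail $w_{2n-m+1}\to\ldots\to w_{2n}$ of $\frak{F}_{2n}$ for any $n$ with $2n\geqslant m$, so $\varphi$ is valid on it; an infinite straight chain, at every fixed modal depth $d$, is indistinguishable from such a tail in a large enough $\frak{F}_{2n}$, so $\varphi$ is valid on it too. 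Combining these cases, every $\varphi \in L_0$ is valid at every world of every $\frak{C}^\ast_0$-frame, yielding $L_0 \subseteq L(\frak{C}^\ast_0)$.

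The principal obstacle is engineering $\Phi^\ast_0$ precisely enough to be satisfied by exactly the frames of the intended shape: too permissive a sentence would admit ``rogue'' point-generated subframes not covered by the classification above (breaking $L_0 \subseteq L(\frak{C}^\ast_0)$), while too restrictive a sentence may exclude some $\frak{F}_{2n}$ as a generated subframe (breaking the other inclusion). The auxiliary side roots $w_{2i}^\ast$ and the cap $(y,x,z)$ in Figure~\ref{C0ast} appear to be chosen precisely to provide first-order ``anchors'' pinning down the even parity of the chain between $w_1$ and its furthest dead end, circumventing the parity obstruction which, by Lemma~\ref{lem:L0notFOdef}, blocks elementary definability over rooted frames alone.
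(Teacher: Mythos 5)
Your proposal follows essentially the same route as the paper: define $\frak{C}^\ast_0$ by a first-order sentence expressing local conditions (a distinguished bottom part, side roots $w^\ast_{2k}$ marking the even-indexed chain worlds, and a two-world extension step that forces the chain to end, if it ends, at an even position), then prove both inclusions by classifying point-generated subframes---chains from non-root worlds, and an even or infinite chain with an attached dead end from $w_1$. The paper does write out the defining sentence explicitly (as a conjunction $\Phi_1\con\Phi_2\con\Phi_3\con\Phi_4$), and, like you, accepts that the class is somewhat permissive (it contains infinite frames), arguing only that every generated subframe is still of one of the harmless shapes.

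One small slip: you assert that the subframe generated by the bottom root $w_1$ ``is isomorphic to some $\frak{F}_{2n}\in\frak{C}_0$.'' That is false for the infinite frames your own sentence admits; there the generated subframe is an infinite chain together with the dead end $w^\ast$, which is isomorphic to no $\frak{F}_{2n}$. The fix is exactly the depth-truncation argument you already deploy for the straight-chain case (and which the paper uses): a formula of modal depth $d$ refuted at $w_1$ of the infinite frame is refuted at $w_1$ of $\frak{F}_{2n}$ for any sufficiently large $n$, since the two frames agree up to depth $d$ from $w_1$. With that patch, and with the routine but necessary work of actually writing down $\Phi^\ast_0$ (which you flag as the ``principal obstacle'' but do not carry out), your argument matches the paper's.
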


\begin{proof}
  We construct $\frak{C}^\ast_0$ so that it contains frames that
  resemble those in $\frak{C}_0$, but where every world $w_{2k}$ is
  marked off by an additional world $w^\ast_{2k}$, which sees
  $w_{2k}$.  This enables us to tell apart evenly and oddly numbered
  elements of the chain of worlds of a frame.  Then, we make sure that
  a chain of a frame in $\frak{C}^\ast_0$ does not end with an oddly
  numbered world.  Notice that $\frak{C}^\ast_0$ might contain frames
  that do not look exactly like $\frak{C}_0$-frames with
  ``additional'' worlds $w^\ast_{2k}$; in particular, it might contain
  infinite frames; this is, however, immaterial to our argument.

  We now describe $\frak{C}^\ast_0$-frames with classical first-order
  formulas (also, see Figure~\ref{C0ast}).

  First, we say that $\frak{C}^\ast_0$-frames are irreflexive and do
  not contain transitive chains with more than two elements:
  $$
  \Phi_1 = \forall x\,\neg R(x,x) \wedge \forall x\forall y\forall
  z\,(R(x,y)\wedge R(y,z) \imp \neg R(x,z)).
  $$
  Second, we describe the ``bottom part'' of a
  $\frak{C}^\ast_0$-frame, which looks as follows: the bottom-most
  world $w_1$ sees two worlds, $w^\ast$ and $w_2$, which is also seen
  from $w_2^\ast$, all these worlds being distinct (that $w_2$ is only
  seen from $w_1$ and $w_2^\ast$ follows from the formula $\Phi_3$
  below):
  $$
  \begin{array}{rcl}
    \Phi_2 (w_1) & = &
                       \exists w^\ast \exists w_2\exists w^\ast_2\,
                       [
                       \forall x\,(R(w_1,x)\leftrightarrow x=w^\ast\vee x=w_2)
    \\&&
         {\con}~R(w^\ast_2,w_2)
         \con \neg \exists x\, R(x, w_1)
         \con \neg \exists x\, R(x, w^\ast_2)
    \\&&
         {\con}~\forall x\,(R(x,w^\ast)\imp x = w_1)
         \con \neg \exists x\, R(w^\ast,x)
         \con w_1 \ne w^\ast_2
         ].
  \end{array}
  $$
  Third, we say that the bottom-most world $w_1$ is the only one that
  can see more than one world and that no world is seen from more than
  two worlds:
  $$
  \begin{array}{rcl}
    \Phi_3 (w_1) & = &
                       \forall x\,(x\ne w_1 \imp \neg\exists y\exists z\,(y\ne z\wedge R(x,y)\con R(x,z))) ~{\con}
    \\&&
         \forall w\neg\exists x\exists y\exists z\,(x\ne y \con y\ne z\con x\ne z \con R(x,w)\con R(y,w)\con R(z,w)).
  \end{array}
  $$
  Lastly, we describe the repetitive procedure of extending an
  $\frak{C}^\ast_0$-frame by appending to the topmost world $w$ two
  worlds $y$ and $x$, distinct from $w$ and each other, such that
  $wRyRx$ holds, as well as a world $z$ that lies off the main chain
  and sees $x$ (intuitively, $z$ marks off $x$ as the topmost world):
  $$
  \begin{array}{rcl}
    \Phi_4 & = &
                 \forall w\,[\exists y\exists z\, (y\ne z \con R(y,w)\con R(z,w)) \imp
    \\&&
         \neg\exists x\,R(w,x) ~{\vee}
    \\&&
         \exists x\exists y\exists z\,(R(w,y)\con R(y,x)\con R(z,x) \con
         y\ne z\con x\ne w ~{\con}
    \\&&
         \forall u\,(R(u,y)\imp u = w) \con \neg\exists u\,R(u,z))].
  \end{array}
  $$
  The procedure of extending a $\frak{C}^\ast_0$-frame described by
  $\Phi_4$ can be carried out arbitrarily, including infinitely, many
  times.

  Then, the class $\frak{C}^\ast_0$ is defined to contain the frames
  satisfying the following classical first-order formula:
  $$
  \exists w_1 ( \Phi_1 \con \Phi_2 (w_1) \con \Phi_3 (w_1) \con \Phi_4).
  $$
  
  We claim that $L(\frak{C}^\ast_0) = L(\frak{C}_0)$.

  To see that $L(\frak{C}_0) \subseteq L(\frak{C}^\ast_0)$, let
  $\frak{F}, w \not\models \vp$, for some $\frak{F}\in\frak{C}^\ast_0$
  and some $w$ in $\frak{F}$.  Let $\frak{F}_w$ be a subframe of
  $\frak{F}$ generated by $w$.

  If $w \ne w_1$, since the branching factor of $\frak{F}$ is two and
  only the bottom-most world $w_1$ of $\frak{F}$ can see two worlds,
  $\frak{F}_w$ is a chain of worlds.  If we take the initial segment
  $\frak{F}_w'$ of this chain of length at most $md(\vp) + 1$, then
  clearly, $\frak{F}_w'$ is isomorphic to a generated subframe of some
  frame in $\frak{C}_0$ and, moreover,
  $\frak{F}_w', w \not\models \vp$.  Hence, in this case,
  $\vp \notin L(\frak{C}_0)$.

  If, on the other hand, $w= w_1$, the world $w$ sees a dead end
  $w^\ast$ and also sees a chain $w_1Rw_2R\ldots$ of worlds that is
  built in repetitive stages, starting from $w_1$ and $w_2$ and being
  extended at every stage by exactly two worlds; hence, the chain
  $w_1Rw_2R\ldots$ is either infinite or contains an even number of
  worlds. In either case, we can find a frame
  $\frak{F} \in \frak{C}_0$ such that $\frak{F}, w_1 \not\models \vp$;
  thus, if $w= w_1$, we also obtain $\vp \notin L(\frak{C}_0)$.

  To see that $L(\frak{C}^\ast_0) \subseteq L(\frak{C}_0)$, notice
  that, given $\frak{F}_n \in\frak{C}_0$, where
  $\frak{F}_n = \langle W_n, R_n \rangle$, if we make every world
  $w_{2k} \in {W}_n$ accessible from an additional world
  $w^\ast_{2k}$, then the resulting frame $\frak{F}^\ast_n$ is
  in~$\frak{C}^\ast_0$.  Since, for every $w \in W_n$, the subframe of
  $\frak{F}_n$ generated by $w$ coincides with the subframe of
  $\frak{F}^\ast_n$ generated by $w$, we obtain that
  $\vp \notin L(\frak{C}_0)$ implies $\vp \notin L(\frak{C}^\ast_0)$,
  as required.

  Since $L_0 = L(\frak{C}_0)$, we conclude that
  $L_0 = L(\frak{C}^\ast_0)$.
\end{proof}

Thus, the assumption of all frames under consideration being rooted is
not insignificant in the context of the present enquiry. We do,
however, show in the next section that, if we discard the restriction
to rooted frames, the answer to the main question considered in this
paper remains negative.

\section{An example over arbitrary frames}
\label{sec:example-2}

In this section, we exhibit the normal predicate modal logic $L_1$
that is recursively enumerable, Kripke complete, but not complete with
respect to an elementary class of any frames.

Given $n \geqslant 2$, let $\frak{G}_n$ be the frame
$\langle W_n, R_n \rangle$, where
$W_n = \{ w_1, w_2, \ldots, w_n, w^\ast \}$ and
$R_n = \{ \langle w_i, w_{i+1} \rangle : 1 \leqslant i < n \} \cup \{
\langle w_n, w_1 \rangle, \langle w_1, w^\ast \rangle \}$.
In other words, $\frak{G}_n$ is a ring made up of $n$ worlds, such
that one world of the ring sees a dead end; the frame $\frak{G}_n$ is
depicted in Figure~\ref{fig-2}. Denote the set of all such frames by
$\frak{C}'$, and let
$\frak{C}_1 = \{ \frak{G}_n \in \frak{C}' : n \mbox{ is even}\}$;
finally, let $L_1 = L(\frak{C}_1)$.

\begin{figure}
\centering
\begin{tikzpicture}[scale=1.5]

\coordinate (w1) at (+0.0000,-2.0000);
\coordinate (w2) at (+1.4142,-1.4142);
\coordinate (w3) at (+2.0000,+0.0000);
\coordinate (w4) at (+1.4142,+1.4142);
\coordinate (w5) at (+0.0000,+2.0000);
\coordinate (w6) at (-1.4142,+1.4142);
\coordinate (w*) at (+0.0000,-1.0000);
\coordinate (wn) at (-1.4142,-1.4142);
\coordinate (w7) at (-2.0000,+0.0000);

\begin{scope}[>=latex]
\draw [->,  shorten >= 1.5pt, shorten <= 1.5pt]
(wn) -- (w1);
\draw [->,  shorten >= 1.5pt, shorten <= 1.5pt]
(w1) -- (w2);
\draw [->,  shorten >= 1.5pt, shorten <= 1.5pt]
(w2) -- (w3);
\draw [->,  shorten >= 1.5pt, shorten <= 1.5pt]
(w3) -- (w4);
\draw [->,  shorten >= 1.5pt, shorten <= 1.5pt]
(w4) -- (w5);
\draw [->,  shorten >= 1.5pt, shorten <= 1.5pt]
(w5) -- (w6);
\draw [->,  shorten >= 1.5pt, shorten <= 1.5pt]
(w1) -- (w*);
\end{scope}

\draw[dashed, rounded corners=1.5] (w6) -- (w7) -- (wn);

\node [below      ] at (w1) {${w_1}$}           ;
\node [below right] at (w2) {${w_2}$}           ;
\node [      right] at (w3) {${w_3}$}           ;
\node [above right] at (w4) {${w_4}$}           ;
\node [above      ] at (w5) {${w_5}$}           ;
\node [above left ] at (w6) {${w_6}$}           ;
\node [above      ] at (w*) {${w^\ast}$}        ;
\node [below left ] at (wn) {${w_n}$}           ;
\node [      left ] at (w7) {${\phantom{w_7}}$} ;


\filldraw [] (w1) circle [radius=1.5pt];
\filldraw [] (w2) circle [radius=1.5pt];
\filldraw [] (w3) circle [radius=1.5pt];
\filldraw [] (w4) circle [radius=1.5pt];
\filldraw [] (w5) circle [radius=1.5pt];
\filldraw [] (w6) circle [radius=1.5pt];
\filldraw [] (w*) circle [radius=1.5pt];
\filldraw [] (wn) circle [radius=1.5pt];

\end{tikzpicture}

\caption{Frame $\frak{G}_n$}
\label{fig-2}
\end{figure}

To show that $L_1$ is recursively enumerable, we effectively embed it
into the classical first-order logic with equality ${\bf QCl}_=$.  The
embedding is very similar to the one defined in the preceding section
for $L_0$.  The only difference is in the description, using a
classical first-order formula, of a particular frame that fails a
modal formula in case the corresponding classical formula is not
valid.

Let $G_n$ be a classical first-order formula in the signature
$\{R, =\}$ that, for a fixed number $n \geqslant 2$, describes the
disjoint union $\frak{G}^{\uplus}_n$ of all the frames $\frak{G}_m$ in
$\frak{C}_1$ such that $m \leqslant n$.  Since $\frak{G}^{\uplus}_n$
is finite, $G_n$ can be effectively constructed.

Let the formula $M$, as well as the translations $\cdot^\ast$ and
$ST_x(\cdot)$, be defined as previously. Given a predicate modal
formula $\vp$, define

$$
\begin{array}{rcl}
  \bar{\vp} & = & {M}\wedge G_{md(\vp) + 3}^\ast \imp \forall x\, (W(x) \imp ST_x(\varphi)).
\end{array}
$$

\begin{lemma}
\label{lem:L1intoQClE}
For every closed predicate modal formula $\vp$, the following holds:
$\vp \in L_1$ if, and only if, $\bar{\vp} \in {\bf QCl}_=$.
\end{lemma}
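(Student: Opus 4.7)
The proof follows the overall structure of Lemma~\ref{lem:Lw0intoQClE}, the only real novelty being the reduction step on the Kripke side, where a large $\frak{C}_1$-countermodel must be collapsed to one over a small even cycle. Write $d = md(\vp)$.

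For the direction $\vp \in L_1 \Rightarrow \bar{\vp} \in \mathbf{QCl}_=$, I would argue by contraposition exactly as in Lemma~\ref{lem:Lw0intoQClE}: from a classical first-order model $\frak{A}$ with $\frak{A} \Vdash M \wedge G^\ast_{d+3}$ and $\frak{A} \not\Vdash \forall x\,(W(x) \imp ST_x(\vp))$, extract a predicate Kripke frame $\frak{F}_D$ whose underlying frame is isomorphic to $\frak{G}^{\uplus}_{d+3}$, apply the standard-translation correspondence recalled in the proof of Lemma~\ref{lem:Lw0intoQClE} to get a Kripke model refuting $\vp$ at some world $v$, and restrict to the $\frak{C}_1$-component of $\frak{G}^{\uplus}_{d+3}$ containing $v$ to conclude $\vp \notin L_1$.

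For the converse, assume $\frak{M}, \widehat{w} \not\models \vp$ on some $\frak{G}_m \in \frak{C}_1$; the aim is to exhibit a refutation on some $\frak{G}_{m'}$ with $m' \le d+3$ even, after which a classical countermodel of $\bar{\vp}$ is built exactly as at the end of the proof of Lemma~\ref{lem:Lw0intoQClE}. If $m \le d+3$, there is nothing to do. If $\widehat{w} = w^\ast$, reducing to $\frak{G}_2$ with $I'$ and $D'$ at the dead end copying $I$ and $D$ at $w^\ast$ suffices, since every $\Box$-subformula is vacuously true at a dead end. In the substantive case $\widehat{w} = w_i$ with $m > d+3$, set $m' = d+2$ when $d$ is even and $m' = d+3$ when $d$ is odd, so that $m'$ is even, $m' \le d+3$ and $m' > d$. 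Let $s$ denote the depth of the first visit to $w_1$ along the cyclic path from $w_i$, i.e.\ $s = 0$ if $i = 1$ and $s = m - i + 1$ otherwise. Pick $i'$ so that the analogous first-visit depth in $\frak{G}_{m'}$ equals $s$: take $i' = 1$ if $s = 0$, take $i' = 2$ (so that the first visit from $w_2$ to $w_1$ in $\frak{G}_{m'}$ occurs at depth $m'-1 > d$) if $s > d$, and take $i' = m' - s + 1$ if $0 < s \le d$. Because $m, m' > d$, no second revisit to $w_1$ within depth $d$ is possible in either frame, so the depths $k \in \{0, \ldots, d\}$ at which the two cyclic paths meet $w_1$ coincide. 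Define $\frak{M}'$ on $\frak{G}_{m'}$ by copying, for each $k \in \{0, \ldots, d\}$, the interpretation of every predicate letter at the depth-$k$ successor of $w_i$ in $\frak{M}$ to the depth-$k$ successor of $w_{i'}$ in $\frak{M}'$ (these $d+1$ successors are pairwise distinct since $d+1 < m'$), take the domain constant on the cycle equal to $D(w_1)$, and copy $I$ and $D$ at $w^\ast$ of $\frak{M}$ to the dead end of $\frak{G}_{m'}$. A routine induction on the subformula $\psi$ of $\vp$ then shows that whenever $md(\psi) + k \le d$, the depth-$k$ successors of $w_i$ in $\frak{M}$ and of $w_{i'}$ in $\frak{M}'$ agree on $\psi$ under every assignment into $D(w_1)$ (the dead-end branch at $w_1$ is handled by the matching interpretation and domain at $w^\ast$, and the quantifier clause by the common cycle domain); specialising to $\psi = \vp$ and $k = 0$ yields $\frak{M}', w_{i'} \not\models \vp$.

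The main obstacle is precisely this cycle-reduction step: unlike in the chain case of Lemma~\ref{lem:Lw0intoQClE}, the truncated model must simultaneously preserve depth-$d$ modal behaviour \emph{and} even cycle length, which forces the specific choice of $m' \in \{d+2, d+3\}$ and the parity-driven placement of $i'$ relative to $w_1$. Once this combinatorial bookkeeping is done, the inductive verification and the lifting to a classical countermodel proceed exactly as in the proof of Lemma~\ref{lem:Lw0intoQClE}.
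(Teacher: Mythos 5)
Your proposal is correct and follows essentially the same route as the paper: the left-to-right direction is inherited from Lemma~\ref{lem:Lw0intoQClE}, and the right-to-left direction reduces a countermodel on a large even ring to one on $\frak{G}_{md(\vp)+2}$ or $\frak{G}_{md(\vp)+3}$ (whichever is even) by matching the position of the world $w_1$ that sees the dead end within the depth-$md(\vp)$ reachable part. The only difference is that you spell out the combinatorial choice of $i'$ and the truth-preservation induction that the paper compresses into ``it is easy to see.''
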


\begin{proof}
  The left-to-right implication is argued as in
  Lemma~\ref{lem:Lw0intoQClE}.

  For the right-to-left, assume that $\vp \notin L_1$, i.e.,
  $\frak{M}, \bar{w} \not\models \vp$, for some model $\frak{M}$
  based on a $\frak{C}_1$-frame, say $\frak{G}_n$, and some world
  $\bar{w}$ in $\frak{M}$.  We show that, then, $\vp$ fails in a
  model based on a $\frak{C}_1$-frame $\frak{G}_m$, where
  $m \leqslant md(\vp) + 3$.

  If $\bar{w} = w^\ast$, then clearly
  $\mmodel{M}', w^\ast \not\models \vp$ holds for a model
  $\mmodel{M}'$ based on the frame $\frak{G}_2$.

  Assume, on the other hand, that $\bar{w} = w_k$, for some
  $k \in \{1, \ldots, n\}$.  If $n \leqslant md(\vp) + 3$, then we are
  done.  Otherwise, let $\frak{G}'$ be either $\frak{G}_{md(\vp) + 2}$
  or $\frak{G}_{md(\vp) + 3}$, whichever belongs to $\frak{C}_1$ (one
  of them, clearly, does).  It is easy to see that $\frak{G}'$
  contains a world $w'$ such that the subframe of $\frak{G}'$ made up
  of worlds reachable from $w'$ in at most $md(\vp)$ steps is
  isomorphic to the subframe of $\frak{G}_n$ made up of worlds
  reachable from $\bar{w}$ in at most $md(\vp)$ steps.  Hence,
  $\frak{M}', w' \not\models \vp$ holds for a model $\frak{M}'$ based
  on $\frak{G}'$.

  Therefore, due to the property of the standard translation mentioned
  in the proof of Lemma~\ref{lem:Lw0intoQClE}, there exists a
  classical first-order model $\frak{A}$ such that
  $\frak{A} \not\Vdash {M}\wedge G_{md(\vp) + 3}^\ast \imp \forall x\,
  (W(x) \imp ST_x(\varphi))$.
  Thus, $\widehat{\vp} \notin {\bf QCl}_=$.
\end{proof}

\begin{lemma}
  \label{lem:L1re}
  Logic $L_1$ is recursively enumerable.
\end{lemma}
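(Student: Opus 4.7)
The plan is to obtain this lemma as an immediate corollary of Lemma~\ref{lem:L1intoQClE}, in exactly the same way that Lemma~\ref{lem:L0re} followed from Lemma~\ref{lem:Lw0intoQClE}. Concretely, Lemma~\ref{lem:L1intoQClE} gives the equivalence $\vp \in L_1 \iff \bar{\vp} \in {\bf QCl}_=$ for closed $\vp$, so it suffices to verify that the map $\vp \mapsto \bar{\vp}$ is effectively computable and then to compose it with the standard semi-decision procedure for ${\bf QCl}_=$.

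First, I would note that the map $\vp \mapsto \bar{\vp}$ has three syntactic ingredients: the fixed formula $M$; the standard translation $ST_x(\cdot)$, which is defined by straightforward recursion on $\vp$; and the relativization $(\cdot)^\ast$, likewise a routine recursion. The only step requiring slight care is the computability of $G^\ast_{md(\vp)+3}$, which encodes the finite disjoint union $\frak{G}^{\uplus}_{md(\vp)+3}$ of the $\frak{C}_1$-frames of size at most $md(\vp)+3$. Since $md(\vp)$ is computable from $\vp$ and the number of worlds in $\frak{G}^{\uplus}_{md(\vp)+3}$ is bounded polynomially in $md(\vp)$, one can mechanically write down a first-order description of this finite frame, listing its worlds (as existentially quantified, pairwise distinct variables) together with the complete list of pairs in, and pairs not in, the accessibility relation. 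Reducing the case of a formula with free variables to the closed case is accomplished by the usual universal closure, which is also effective.

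Second, I would invoke the well-known fact that ${\bf QCl}_=$ is recursively enumerable. Composing this enumerator with the computable reduction $\vp \mapsto \bar{\vp}$ from Lemma~\ref{lem:L1intoQClE} yields a semi-decision procedure for $L_1$: given $\vp$, compute $\bar{\vp}$ and search for its ${\bf QCl}_=$-proof. By Lemma~\ref{lem:L1intoQClE}, this procedure terminates precisely on the elements of $L_1$, giving the desired recursive enumeration.

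I do not expect any real obstacle here; the entire proof is a one-line appeal to Lemma~\ref{lem:L1intoQClE} and the recursive enumerability of ${\bf QCl}_=$. The only mildly non-trivial observation, hidden in the word ``effectively'' above, is the computability of $G_{md(\vp)+3}$, and this is immediate because $\frak{G}^{\uplus}_{md(\vp)+3}$ is a finite structure of size bounded by a computable function of $\vp$.
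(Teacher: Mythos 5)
Your proposal is correct and matches the paper's proof, which consists of the single line ``Immediate from Lemma~\ref{lem:L1intoQClE}''; you have simply spelled out the effectiveness of $\vp \mapsto \bar{\vp}$ and the recursive enumerability of ${\bf QCl}_=$ that the paper leaves implicit.
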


\begin{proof}
  Immediate from Lemma~\ref{lem:L1intoQClE}.
\end{proof}

It remains to show that $L_1$ is not complete with respect to an
elementary class of frames.  To that end, we define the following
formulas:
$$
\begin{array}{lclr}
  \beta_n & = & \Diamond \Box {\bottom} \con \Diamond^n \Diamond \Box
                {\bottom} \con \bigwedge\limits_{k=1}^{n-1} \neg
                \Diamond^k  \Diamond \Box
                {\bottom};
  \\
  \gamma & = & \Diamond \Box {\bottom} \dis (\Diamond p \imp \Box p);
         & \\
  \delta^k_{n} & = & \Diamond^k \beta_n \con p \imp \Diamond^n p;
         & \\
  \varepsilon_n & = & \beta_n \con p \imp \Box^n (\beta_n \con p),
\end{array}
$$
where $p$ is a propositional variable.

\begin{lemma}
  \label{lem:formulas}
  The formula $\beta_n$ is valid at a world $w$ if, and only if, $w$
  can see a dead end, can see in $n$ steps a world that sees a dead
  end, and cannot see in any number $k \in \{1, \ldots, n-1\}$ of
  steps a world that sees a dead end.

  The formula $\gamma$ is valid on a frame $\frak{F}$ if, and only if,
  only those worlds of $\frak{F}$ that see a dead end can see more
  than one world.

  The formula $\delta^k_{n}$ is valid on a frame $\frak{F}$ if, and
  only if, every world in $\frak{F}$ that can see in $k$ steps a world
  $w$ such that $w \models \beta_n$ can see itself in $n$ steps.

  The formula $\varepsilon_n$ is valid on a frame $\frak{F}$ if, and
  only if, every world $w$ of $\frak{F}$ such that $w \models \beta_n$
  cannot see in $n$ steps a world $w' \ne w$.
\end{lemma}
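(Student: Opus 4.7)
The plan is to treat each of the four parts in turn, unfolding the Kripke semantics and, where a frame condition is at stake, exploiting the freedom to choose the valuation of the propositional variable~$p$. A convenient preliminary observation, used throughout, is that $\beta_n$ contains no propositional variable: its truth at a world depends only on the underlying frame. Hence in the last three items $\beta_n$ behaves as a stable frame-level predicate that cannot be perturbed by rechoosing the valuation of~$p$.

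For $\beta_n$, I would unfold conjunct by conjunct. The first conjunct $\Diamond \Box \bot$ is true at $w$ iff $w$ has an $R$-successor that is a dead end. The second conjunct $\Diamond^n \Diamond \Box \bot$ is true at $w$ iff some world reachable from $w$ in exactly $n$ steps sees a dead end. Each of the conjuncts $\neg \Diamond^k \Diamond \Box \bot$ for $2 \le k \le n-1$ is true at $w$ iff no world reachable from $w$ in exactly $k$ steps sees a dead end. Taking the conjunction yields the stated characterization.

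For $\gamma$, the key observation is that $\Diamond p \imp \Box p$ is true at $w$ under every valuation iff $w$ has at most one $R$-successor; the only-if direction is witnessed by the valuation making $p$ true at exactly one of two distinct successors. Hence $\gamma$ is valid at $w$ iff either $w$ sees a dead end or $w$ sees at most one successor, which rephrases as the claim that only worlds seeing a dead end may have more than one successor.

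For $\delta^k_n$ and $\varepsilon_n$, each if-direction is a direct semantic check using the assumed frame condition together with the valuation-independence of $\beta_n$. For the only-if directions, I would construct a refuting valuation by the standard point-trick. For $\delta^k_n$, given $w$ with $w \models \Diamond^k \beta_n$ but $\neg(w R^n w)$, set $p$ true exactly at~$w$; then $\Diamond^k \beta_n \con p$ holds at $w$, while $\Diamond^n p$ at $w$ would force $w R^n w$. For $\varepsilon_n$, given $w$ with $w \models \beta_n$ and some $w' \ne w$ with $w R^n w'$, set $p$ true exactly at~$w$; then $\beta_n \con p$ holds at $w$ but fails at $w'$, refuting $\Box^n(\beta_n \con p)$ at $w$. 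I foresee no genuine obstacle: the lemma is a list of standard frame-correspondence facts, and the variable-freeness of $\beta_n$ is the technical convenience that lets the point-valuation trick run cleanly.
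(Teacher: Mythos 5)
Your proof is correct and follows essentially the same route as the paper: the paper's own proof consists precisely of the observation that $\beta_n$ is variable-free (hence valuation-independent) plus the point-valuation trick (make $p$ true at exactly one world) for the nontrivial direction of $\varepsilon_n$, leaving the remaining items to the reader. You have simply carried out the routine verifications the paper omits, using the same two ideas.
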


\begin{proof}
  We only remark on $\varepsilon_n$, leaving the rest to the reader.

  Notice that the truth status of $\beta_n$ at a world does not depend
  on the interpretation.  If $\frak{F}$ contains a world $w$ such that
  $w \models \beta_n$ and $w$ can see in $n$ steps a word $w' \ne w $,
  then $\frak{M}, w \not\models \varepsilon_n$ if $\frak{M}$ is a
  model based on $\frak{F}$ such that $\frak{M}, v \models p$ if, and
  only if, $v = w$.
\end{proof}

Recall that the formula $alt_2$ is valid at worlds that see at most
two worlds.

\begin{lemma}
  \label{lem:valid_in_L1}
  The following formulas belong to $L_1$: $alt_2$; $\gamma$;
  $\delta^k_{n}$, for every $k, n$; $\varepsilon_n$, for every $n$.
\end{lemma}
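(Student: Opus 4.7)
The plan is to verify each of the four claims by inspecting the structure of frames in $\frak{C}_1$. A frame $\frak{G}_m \in \frak{C}_1$ consists of an even-length cycle $w_1 \to w_2 \to \cdots \to w_m \to w_1$ together with a single dead end $w^\ast$ accessible only from $w_1$. In particular, $w_1$ has exactly two successors, $w_2$ and $w^\ast$; every other $w_i$ has exactly one successor (the next cycle world); and $w^\ast$ has none. This immediately gives $alt_2 \in L_1$ by the cited propositional fact, and, in view of Lemma~\ref{lem:formulas}, also $\gamma \in L_1$, since the only world of $\frak{G}_m$ with more than one successor is $w_1$, which sees the dead end $w^\ast$.

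For $\delta^k_n$ and $\varepsilon_n$ I would first establish the following subclaim, which is the heart of the proof: for every even $m \geqslant 2$ and every $n \geqslant 2$, a world $w$ of $\frak{G}_m$ satisfies $\beta_n$ if, and only if, $m = n$ and $w = w_1$. The first conjunct $\Diamond \Box \bottom$ of $\beta_n$ already fails at $w^\ast$ and at each $w_i$ with $i \geqslant 2$, because $w^\ast$ is the only dead end in $\frak{G}_m$ and is seen only from $w_1$; so only $w_1$ is a candidate. At $w_1$, since $w^\ast$ is a sink reachable solely from $w_1$, the formula $\Diamond^{j+1} \Box \bottom$ holds at $w_1$ iff there is a length-$j$ path from $w_1$ back to $w_1$ staying inside the cycle, iff $m \mid j$ (with $j = 0$ trivial). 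Feeding this into the three conjuncts of $\beta_n$ gives the conditions $m \mid n$ and $m \nmid k$ for every $k \in \{2, \ldots, n-1\}$. Since $m \geqslant 2$, the second condition forces $m \geqslant n$ (otherwise $k = m$ would be a forbidden value), while the first forces $m \leqslant n$; hence $m = n$.

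With the subclaim in hand, both remaining formulas follow via Lemma~\ref{lem:formulas}. For $\varepsilon_n$: if $m \neq n$, the assertion is vacuous; if $m = n$, the sole $\beta_n$-world is $w_1$, and any length-$n$ path starting at $w_1$ cannot pass through the sink $w^\ast$, so it stays in the cycle and returns to $w_1$. For $\delta^k_n$: the case $m \neq n$ is again vacuous; if $m = n$, then any world $v$ satisfying $\Diamond^k \beta_n$ must reach $w_1$ in $k$ steps, hence $v \neq w^\ast$, so $v = w_j$ for some $j$, and $v$ returns to itself in $n = m$ steps by traversing the cycle once. This verifies the characterisations of Lemma~\ref{lem:formulas} for both families.

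The main obstacle is the divisibility and path-counting bookkeeping needed for the subclaim on $\beta_n$; once that is in place, the remaining verifications are by direct inspection.
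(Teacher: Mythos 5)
Your proof is correct and takes the same route as the paper, whose own proof merely states that the frame conditions from Lemma~\ref{lem:formulas} are straightforward to verify on $\frak{C}_1$-frames; your subclaim that $\beta_n$ holds in $\frak{G}_m$ precisely at $w_1$ when $m=n$ is the right way to organize that verification. One small point to tighten in the $\varepsilon_n$ case: a length-$n$ walk from $w_1$ cannot pass \emph{through} the sink $w^\ast$ but could a priori \emph{end} there; this is excluded because it would require a length-$(n-1)$ return to $w_1$, i.e., $m \mid n-1$, which fails for $m = n \geqslant 2$.
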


\begin{proof}
  It is straightforward to check that every $\frak{C}_1$-frame
  satisfies the properties associated with the formulas listed in the
  statement of the lemma.
\end{proof}

\begin{lemma}
  \label{lem:betas}
  Let $n$ be a natural number such that $n \geqslant 1$. Then,
  $\neg \beta_n \in L_1$ if, and only if, $n$ is odd.
\end{lemma}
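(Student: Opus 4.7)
The plan is to translate the syntactic claim into a semantic one and then exploit the concrete ring structure of the $\frak{C}_1$-frames. Since $L_1 = L(\frak{C}_1)$, the condition $\neg \beta_n \in L_1$ is equivalent to $\beta_n$ being refuted at every world of every $\frak{G}_m \in \frak{C}_1$. Using the characterization of $\beta_n$ provided by Lemma~\ref{lem:formulas}, the task reduces to determining for which $n$ the formula $\beta_n$ is satisfiable somewhere in $\frak{C}_1$.

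The first step is to observe that in every $\frak{G}_m \in \frak{C}_1$ the unique dead end is $w^\ast$, and its unique predecessor is $w_1$. Hence $\Diamond \Box \bottom$ is true only at $w_1$, so if some world $w$ of $\frak{G}_m$ satisfies $\beta_n$ then $w = w_1$, and the remaining conjuncts of $\beta_n$ at $w_1$ reduce to questions about reaching $w_1$ in a prescribed number of steps. Because $w^\ast$ is a sink, any path of length $k \geq 1$ from $w_1$ witnessing $\Diamond^k \Diamond \Box \bottom$ must traverse only ring edges, and such a path returns to $w_1$ if, and only if, $m \mid k$. Consequently $w_1 \models \Diamond^k \Diamond \Box \bottom$ iff $m \mid k$.

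Putting these together, $\beta_n$ is satisfiable in some $\frak{G}_m \in \frak{C}_1$ precisely when $m \mid n$ and $m \nmid k$ for every $k \in \{2, \ldots, n-1\}$. From $m \mid n$ and $m \geq 2$ one obtains $m \in \{2, \ldots, n\}$; if $m \leq n-1$, then $m$ itself lies in the forbidden set $\{2, \ldots, n-1\}$ and divides itself, a contradiction. Hence $m = n$. Therefore $\beta_n$ is satisfiable in some $\frak{C}_1$-frame if, and only if, $\frak{G}_n \in \frak{C}_1$, i.e., if, and only if, $n$ is even (for the edge case $n = 1$ the set $\{2, \ldots, n\}$ is empty, so no admissible $m$ exists, which is consistent with $n = 1$ being odd). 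Equivalently, $\neg \beta_n \in L_1$ holds exactly when $n$ is odd, as claimed. No serious obstacle is anticipated: beyond the characterization in Lemma~\ref{lem:formulas}, the only content of the argument is the elementary combinatorial fact about returns of a walk on an $m$-cycle.
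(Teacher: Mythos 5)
Your proof is correct and takes essentially the same route as the paper's: identify $w_1$ as the only world of $\frak{G}_m$ where $\Diamond\Box\bot$ holds, then reduce the remaining conjuncts of $\beta_n$ to divisibility of walk lengths around the ring by the (even) ring size $m$. Your uniform characterization ($\beta_n$ is satisfiable in $\frak{G}_m$ iff $m=n$) is a bit more explicit than the paper's two-case argument, but the underlying idea is identical.
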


\begin{proof}
  Suppose $n$ is odd.  Let $\frak{G}_m \in \frak{C}_1$ and let $w$ be
  a world in $\frak{G}_m$.  Assume that
  $\frak{G}_m, w \models \Diamond \Box {\bottom}$. Due to
  Lemma~\ref{lem:formulas}, this is only possible if $w = w_1$.  Since
  in no $\frak{C}_1$-frame can $w_1$ see a dead end in an odd number
  of steps, $\frak{G}_m, w \models \neg \beta_n$ holds for every $w$
  in $\frak{G}_m$; thus, $\neg \beta_n \in L_1$.

  Suppose $n$ is even. Then, due to Lemma~\ref{lem:formulas},
  $\frak{G}_{n}, w_1 \models \beta_n$ and, hence,
  $\frak{G}_{n}, w_1 \not\models \neg \beta_n$. Since
  $\frak{G}_{n} \in \frak{C}_1$, we conclude that
  $\neg \beta_n \notin L_1$.
\end{proof}

\begin{lemma}
  \label{lem:L1notFOdef}
  Logic $L_1$ is not complete with respect to an elementary class of
  frames.
\end{lemma}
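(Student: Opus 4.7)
The plan is to assume for contradiction that $L_1 = L(\frak{C})$ for some elementary class $\frak{C}$ of frames, defined by a classical first-order sentence $\Phi$ in the signature $\{R, =\}$, and to derive a contradiction by showing that no single $\Phi$ can track the parity of cycle lengths.

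First, I would observe that $\beta_n$ contains no propositional variables, so its standard translation $\phi_n(x) = ST_x(\beta_n)$ is a classical first-order formula in $\{R,=\}$, and $\beta_n$ holds at a world $x$ of a frame $\frak{F}$ if and only if $\phi_n(x)$ holds at $x$ in $\frak{F}$ viewed as a classical $\{R,=\}$-model. Combined with Lemma~\ref{lem:betas}, this yields: $\Phi \con \exists x\,\phi_n(x)$ is satisfiable in classical first-order logic if and only if $n$ is even.

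Second, I would use the validities listed in Lemma~\ref{lem:valid_in_L1} to pin down, up to finite local structure, any world $w$ of a $\frak{C}$-frame $\frak{F}$ at which $\beta_n$ holds. The formula $alt_2$ bounds the branching at $2$; the formula $\gamma$ permits branching only at dead-end-seers; $\delta^0_n$ forces the non-dead-end path from $w$ to return to $w$ in exactly $n$ steps; and $\varepsilon_n$ makes $w$ the unique $n$-step successor of $w$. Consequently, $w$ lies on a distinguished cycle of length exactly $n$ inside $\frak{F}$, and away from $w$ each cycle-world is a non-dead-end-seer with a unique successor.

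Finally, the contradiction would come from an Ehrenfeucht--Fra\"{\i}ss\'e argument on these cycle-containing structures. Let $k$ be the quantifier rank of $\Phi$. For sufficiently large $n$, cycles of length $n$ and of length $n{+}1$ are $k$-equivalent as classical $\{R,=\}$-structures, a standard fact about EF games on cycle graphs. Given a $\frak{C}$-frame $\frak{F}$ witnessing $\Phi \con \exists x\,\phi_n(x)$ for an even $n \gg k$, one replaces the length-$n$ cycle through $w$ by a length-$(n{+}1)$ cycle to obtain a frame $\frak{F}'$. The local rigidity from the previous step ensures this surgery alters only an $O(k)$-neighbourhood, so $\frak{F}'$ and $\frak{F}$ are $k$-equivalent and hence $\frak{F}' \Vdash \Phi$, placing $\frak{F}' \in \frak{C}$. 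But the image of $w$ in $\frak{F}'$ satisfies $\beta_{n+1}$, contradicting $\neg\beta_{n+1} \in L_1$ for the odd value $n{+}1$. The main obstacle will be making the surgery rigorous, which reduces to a careful EF-game argument lifting cycle-equivalence to equivalence of the ambient frames.
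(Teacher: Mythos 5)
Your proposal follows essentially the same route as the paper's proof: take $n = 2^k$ (even) where $k$ is the quantifier rank of $\Phi$, use Lemma~\ref{lem:betas} to extract a $\frak{C}$-frame with a world $w$ satisfying $\beta_n$, use $alt_2$, $\gamma$, $\delta^k_{n}$, $\varepsilon_n$ to force $w$ onto a ring of length exactly $n$ with a pendant dead end, and then argue by an Ehrenfeucht--Fra\"{i}ss\'{e} game that the frame with the ring stretched to length $n{+}1$ also satisfies $\Phi$, although it cannot lie in $\frak{C}$ because $\neg\beta_{n+1}\in L_1$.

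There is one step you gloss over that the paper treats as essential: you pin down the structure of the ring as seen \emph{from} $w$, but you never rule out edges \emph{into} the ring from worlds outside it. If external worlds could see ring worlds, the duplicator's strategy for the cycle swap would have to respect those attachment points, and your claim that the surgery ``alters only an $O(k)$-neighbourhood'' (which is not literally true anyway, since the entire ring of length $2^k$ is replaced; the correct statement is that bare cycles of lengths $m$ and $m{+}1$ are $k$-equivalent for $m$ large relative to $k$) would not by itself lift cycle-equivalence to equivalence of the ambient frames. The paper closes this gap using the full family $\delta^k_{n}$ for \emph{every} $k$, whereas you only invoke $\delta^0_{n}$: if some $v \notin X = \{w_1,\dots,w_{n}\}$ saw a world in $X$, then $v$ would see the $\beta_{n}$-world $w_1$ in some number $k$ of steps, so $\delta^k_{n}$ would force a length-$n$ path from $v$ back to $v$; that path cannot pass through the ring (ring worlds never leave the ring), so $v$ must have a second non-dead-end successor, and together with the dead end forced by $\gamma$ this contradicts $alt_2$. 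Adding this isolation argument makes the ring (with its dead end) free-standing, after which the EF composition you sketch goes through.
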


\begin{proof}
  Assume otherwise, i.e, let $L_1$ be complete with respect to an
  elementary class $\frak{C}$ of frames.  Further assume that
  $\frak{C}$ is defined by a classical first-order formula $\Phi$ with
  quantifier rank $n$.

  Since ${2^n}$ is even, due to Lemma~\ref{lem:betas},
  $\neg \beta_{2^n} \notin L_1$.  Therefore, $\frak{C}$ contains a
  frame $\frak{F} = \langle W, R \rangle$ and $w_1 \in W$ such that
  $\frak{F}, w_1 \models \beta_{2^n}$.  Thus, $\frak{F}$ contains
  worlds $w_1, \ldots, w_{2^n + 1}, w_1^\ast, w_{2^n + 1}^\ast$ such
  that $w_1 R w_2 \ldots w_{2^n} R w_{2^n + 1}$, as well as
  $w_1 R w_1^\ast$ and $w_{2^n + 1} R w_{2^n + 1}^\ast$. Since
  $\frak{F} \models \gamma$, every $w_i \in \{ w_2, \ldots, w_{2^n}\}$
  can see only one world, which is, thus, $w_{i+1}$. As
  $\frak{F} \models \varepsilon_{2^n}$, due to
  Lemma~\ref{lem:formulas}, $w_1 = w_{2^n+1}$.  Since
  $\frak{F} \models alt_2$, we obtain $w^\ast_{2^n+1} = w^\ast_1$ or
  $w^\ast_{2^n+1} = w_2$.  Since $\frak{F} \models \delta^k_{2^n}$,
  for every $k$, no world in $X = \{w_1, \ldots w_{2^n} \}$ is seen
  from a world in $W \setminus X$.  Indeed, suppose otherwise, i.e.,
  let $v R w$, for some $v \in W \setminus X$ and some $w \in X$.
  Then, $v R^k w_1$ holds for some $k \in \mathds{N}$. Since
  $\frak{F}, v \models \delta^k_{2^n}$, there is a path of length
  $2^n$ from $v$ to $v$. Notice that the said path cannot contain
  worlds from $X$; hence, $w$ is not part of this path, and thus $v$
  can see at least two worlds neither of which is a dead end.  But, as
  $\frak{F}, v \models \gamma$, the world $v$ also sees a dead end,
  which contradicts the fact that $\frak{F}, v \models alt_2$.

  Thus, $\frak{F}$ looks like a ring of $2^n$ worlds, where one world
  in the ring also sees a dead end, and where no world not in the ring
  can see a world in the ring.  Now, consider a frame $\frak{F}'$ that
  looks like $\frak{F}$, except that its ring is made up of $2^{n}+1$,
  rather than $2^{n}$, worlds.  Since, due to Lemma~\ref{lem:betas},
  $\neg \beta_{2^{n}+1} \in L_1$, we obtain
  $\frak{F}' \notin \frak{C}$.  One can, however, use
  Ehrenfeucht--Fra\"{i}ss\'{e} games (see, e.g., \cite[Chapter
  3]{Libkin}) to show that $\Phi$ cannot distinguish $\frak{F}$ from
  $\frak{F}'$, which gives us a contradiction.
\end{proof}

\begin{theorem}
  There exists a normal predicate modal logic which is Kripke
  complete, recursively enumerable, and not complete with respect to
  an elementary class of frames.
\end{theorem}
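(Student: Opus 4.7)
The plan is to take $L_1 = L(\frak{C}_1)$, the logic already constructed in this section, as the required witness, and to read the three demanded properties straight off the lemmas proved above. Kripke completeness is built into the definition of $L_1$, since the set of formulas valid on every frame in $\frak{C}_1$ is, by Proposition-style general facts about $L(\cdot)$, a normal predicate modal logic sound and complete with respect to $\frak{C}_1$. Recursive enumerability is precisely Lemma~\ref{lem:L1re}, which in turn is the immediate corollary of the effective classical reduction $\vp \mapsto \bar{\vp}$ established in Lemma~\ref{lem:L1intoQClE}: since ${\bf QCl}_=$ is recursively enumerable and the translation is computable, so is $L_1$. Finally, non-definability by any elementary class of frames is Lemma~\ref{lem:L1notFOdef}.

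So the body of the proof is essentially a single line: ``Take $L = L_1$ and combine Lemmas~\ref{lem:L1re} and~\ref{lem:L1notFOdef} with the fact that $L_1 = L(\frak{C}_1)$.'' There is no new technical work left to do at the level of the theorem itself.

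If I had to identify where the real difficulty sits, it is entirely inside Lemma~\ref{lem:L1notFOdef}, which is the step any approach to this theorem must go through. The obstruction one faces is that one has to force an elementary class $\frak{C}$ defining $L_1$ to contain a frame that looks essentially like a ring of length $2^n$ (where $n$ is the quantifier rank of the hypothetical defining formula), using a formula like $\beta_{2^n}$ together with the ``shape'' formulas $\gamma$, $alt_2$, $\varepsilon_n$, and $\delta^k_n$ to nail down the geometry, and then use an Ehrenfeucht--Fra\"{\i}ss\'{e} argument on rings of lengths $2^n$ and $2^n+1$ to contradict the fact that $\neg\beta_{2^n+1}\in L_1$. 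Once that is done---and it has been done in the preceding lemma---the proof of the theorem reduces to invoking the three lemmas. Thus my proposal is simply: exhibit $L_1$, cite completeness by definition, cite Lemma~\ref{lem:L1re} for recursive enumerability, and cite Lemma~\ref{lem:L1notFOdef} for non-elementarity.
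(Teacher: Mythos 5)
Your proposal is correct and coincides with the paper's own proof, which consists of the single line ``Take $L_1$ as such a logic,'' relying exactly as you do on the definition $L_1 = L(\mathfrak{C}_1)$ for Kripke completeness, on Lemma~\ref{lem:L1re} for recursive enumerability, and on Lemma~\ref{lem:L1notFOdef} for failure of completeness with respect to an elementary class of frames. You correctly identify that all the substantive work lives in those preceding lemmas rather than in the theorem itself.
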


\begin{proof}
  Take $L_1$ as such a logic.
\end{proof}

\section{Conclusion}
\label{sec:discussion}

The main result of the present paper is the construction of an example
of a normal predicate modal logic that is Kripke complete, recursively
enumerable, and not complete with respect to an elementary class of
Kripke frames, which solves the problem left open in~\cite{RShAiML}.
Notice that our example is a conservative extension of the classical
first-order logic, as no restrictions on either the sizes of the
domains of the worlds or the possible interpretations of predicate
letters have been used in its construction.  Also notice that a
similar example can be constructed, along the same lines, of a logic
of predicate frames with constant domains.



It remains unclear to us whether a similar example can be constructed
in the extensions of the intuitionistic predicate logic.  A step
toward answering that question would the construction of a normal
predicate modal logic, satisfying the properties studied in this
paper, whose frames are reflexive and transitive, as is the
accessibility relation in the extensions of the intuitionistic logic.
While it is not difficult to modify the example presented in
Section~\ref{sec:example-2} to construct a logic, satisfying the same
properties, of reflexive frames, it is not clear to us whether an
example over reflexive and transitive---or, simply,
transitive---frames exists.


\section*{Acknowledgements}

We would like to thank Philippe Balbiani, Andreas Herzig, and the
participants of the research seminar at Institut de Recherche en
Informatique de Toulouse, Universit\'e Paul Sabatier, for a discussion
of the ideas that led to the results presented in this paper.  We
would also like to thank Valentin Shehtman and the participants of the
research seminar at the Department of Mathematical Logic, Lomonosov
Moscow State University, for a discussion of the ideas presented here.
We thank the anonymous referees for the comments and suggestions that
helped to improve the paper.



\selectlanguage{english}

\end{document}